\theoremstyle{plain}
\newtheorem{thm}{Theorem}[section]
\newtheorem{lem}[thm]{Lemma}
\newtheorem{prop}[thm]{Proposition}
\newtheorem{cor}[thm]{Corollary}
\theoremstyle{definition}
\newtheorem{defi}[thm]{Definition}
\newtheorem{ex}[thm]{Example}
\newtheorem{ntn}[thm]{Notation}
\theoremstyle{remark}
\newtheorem*{notes}{Notes}
\newtheorem{rmk}[thm]{Remark}
\newtheorem{disc}[thm]{Discussion}
\DeclareMathOperator{\height}{ht} 
\DeclareMathOperator{\ext}{exten}
 \DeclareMathOperator{\HSL}{HSL}
\DeclareMathOperator{\Spec}{Spec} 
 \DeclareMathOperator{\ann}{ann}
\DeclareMathOperator{\grann}{gr-ann} 
\def\Z{\mathbb Z}
\def\N{\mathbb N}
\def\F{\mathbb F}
\def\fa{{\mathfrak{a}}}
\def\fb{{\mathfrak{b}}}
\def\fB{{\mathfrak{B}}}
\def\fc{{\mathfrak{c}}}
\def\fd{{\mathfrak{d}}}
\def\fm{{\mathfrak{m}}}
\def\fp{{\mathfrak{p}}}
\def\nn{\relax\ifmmode{\mathbb N_{0}}\else$\mathbb N_{0}$\fi}
\def\lra{\longrightarrow}
\begin{document}

\title[Graded annihilators and tight closure test ideals]{Graded annihilators and tight closure test ideals}
\author{RODNEY Y. SHARP}
\address{Department of Pure Mathematics,
University of Sheffield, Hicks Building, Sheffield S3 7RH, United
Kingdom} \email{R.Y.Sharp@sheffield.ac.uk}

\subjclass[2000]{Primary 13A35, 16S36, 13D45, 13E05, 13E10, 13H10;
Secondary 13J10}

\date{\today}

\keywords{Commutative Noetherian ring, prime characteristic,
Frobenius homomorphism, tight closure, test element, test ideal,
Frobenius skew polynomial ring, local cohomology, Gorenstein local
ring.}

\begin{abstract}
Let $R$ be a commutative Noetherian local ring of prime
characteristic $p$, with maximal ideal $\fm$. The main purposes of
this paper are to show that if the injective envelope $E$ of $R/\fm$
has a structure as an $x$-torsion-free left module over the
Frobenius skew polynomial ring over $R$ (in the indeterminate $x$),
then $R$ has a tight closure test element (for modules) and is
$F$-pure, and to relate the test ideal of $R$ to the smallest
`$E$-special' ideal of $R$ of positive height.

A byproduct is an analogue of a result of Janet Cowden Vassilev: she
showed, in the case where $R$ is an $F$-pure homomorphic image of an
$F$-finite regular local ring, that there exists a strictly
ascending chain $0 = \tau_0 \subset \tau_1 \subset \cdots \subset
\tau_t = R$ of radical ideals of $R$ such that, for each $i = 0,
\ldots, t-1$, the reduced local ring $R/\tau_i$ is $F$-pure and its
test ideal (has positive height and) is exactly $\tau_{i+1}/\tau_i$.
This paper presents an analogous result in the case where $R$ is
complete (but not necessarily $F$-finite) and $E$ has a structure as
an $x$-torsion-free left module over the Frobenius skew polynomial
ring. Whereas Cowden Vassilev's results were based on R. Fedder's
criterion for $F$-purity, the arguments in this paper are based on
the author's work on graded annihilators of left modules over the
Frobenius skew polynomial ring.
\end{abstract}

\maketitle

\setcounter{section}{-1}
\section{\sc Introduction}
\label{intro}

This paper presents new information about test elements in tight
closure theory in commutative algebra.

Throughout the paper, $R$ will denote a commutative Noetherian ring
of prime characteristic $p$. We shall always denote by $f:R\lra R$
the Frobenius homomorphism, for which $f(r) = r^p$ for all $r \in
R$. This paper will make use of the author's work in \cite{ga} on
left modules over the {\em Frobenius skew polynomial ring over
$R$\/}, that is, the skew polynomial ring $R[x,f]$ associated to $R$
and $f$ in the indeterminate $x$ over $R$. Recall that $R[x,f]$ is,
as a left $R$-module, freely generated by $(x^i)_{i \in \nn}$ (I use
$\N$ and $\nn$ to denote the set of positive integers and the set of
non-negative integers, respectively),
 and so consists
 of all polynomials $\sum_{i = 0}^n r_i x^i$, where  $n \in \nn$
 and  $r_0,\ldots,r_n \in R$; however, its multiplication is subject to the
 rule
 $
  xr = f(r)x = r^px$ for all $r \in R.$
Note that $R[x,f]$ can be considered as a positively-graded ring
$R[x,f] = \bigoplus_{n=0}^{\infty} R[x,f]_n$, with $R[x,f]_n = Rx^n$
for all $n \in \nn$.

If, for $n \in \N$, we endow $Rx^n$ with its natural structure as an
$(R,R)$-bimodule (inherited from its being a graded component of
$R[x,f]$), then $Rx^n$ is isomorphic (as $(R,R)$-bimodule) to $R$
viewed as a left $R$-module in the natural way and as a right
$R$-module via $f^n$, the $n$th iterate of the Frobenius ring
homomorphism. With this observation, we can formulate the
definitions of $F$-purity and of the tight closure of the zero
submodule in an $R$-module $M$ in terms of the left $R[x,f]$-module
$R[x,f]\otimes_RM$, as follows.

First of all, we can write that $R$ is {\em $F$-pure\/} precisely
when, for each $R$-module $N$, the map $\psi_N : N \lra
Rx\otimes_RN$ for which $\psi_N(g) = x\otimes g$ for all $g \in N$
is injective.

For discussion of tight closure we use $R^{\circ}$ to denote the
complement in $R$ of the union of the minimal prime ideals of $R$.
Let $L$ and $M$ be $R$-modules and let $K$ be a submodule of $L$.
Observe that there is a natural structure as $\nn$-graded left
$R[x,f]$-module on $R[x,f]\otimes_RM =
\bigoplus_{n\in\nn}(Rx^n\otimes_RM)$.  An element $m \in M$ belongs
to $0^*_M$, the {\em tight closure of the zero submodule in $M$\/},
if and only if there exists $c \in R^{\circ}$ such that the element
$1 \otimes m \in (R[x,f]\otimes_RM)_0$ is annihilated by $cx^j$ for
all $j \gg 0$: see M. Hochster and C. Huneke \cite[\S 8]{HocHun90}.
Furthermore, the tight closure $K^*_L$ of $K$ in $L$ is the inverse
image, under the natural epimorphism $L \lra L/K$, of $0^*_{L/K}$,
the tight closure of $0$ in $L/K$.

A test element (for modules) for $R$ is an element $c \in R^{\circ}$
such that, for every finitely generated $R$-module $M$ and every $j
\in \nn$, the element $cx^j$ annihilates $1 \otimes m \in
(R[x,f]\otimes_RM)_0$ for every $m \in 0^*_M$. If $R$ has a test
element, then it must be reduced. It is a result of Hochster and
Huneke \cite[Theorem (6.1)(b)]{HocHun94} that a reduced algebra of
finite type over an excellent local ring of characteristic $p$ has a
test element.

In attempts to gain a greater understanding of tight closure, the
author has investigated in the recent paper \cite{ga} properties of
certain left modules over the Frobenius skew polynomial ring
$R[x,f]$. Let $H$ be a left $R[x,f]$-module. The {\em graded
annihilator $\grann_{R[x,f]}H$\/} (or grann$_{R[x,f]}H$) {\em of
$H$\/} is defined in \cite[1.5]{ga} and is the largest graded
two-sided ideal of $R[x,f]$ that annihilates $H$. We shall use
$\mathcal{G}(H)$ (or $\mathcal{G}_{R[x,f]}(H)$ when it is desirable
to emphasize the ring $R$) to denote the set of all graded
annihilators of $R[x,f]$-submodules of $H$.

When $H$ is $x$-torsion-free, $\grann_{R[x,f]}H$ has the form $\fb
R[x,f] = \bigoplus _{n\in\nn} \fb x^n$ for some radical ideal $\fb$
of $R$, and, in that case, we write $\mathcal{I}(H)$ for the set of
(necessarily radical) ideals $\fc$ of $R$ for which there is an
$R[x,f]$-submodule $N$ of $H$ such that $\grann_{R[x,f]}N = \fc
R[x,f]$; in these circumstances, the members of $\mathcal{I}(H)$ are
referred to as the {\em $H$-special $R$-ideals}, and we have
$\mathcal{G}_{R[x,f]}(H) = \left\{\fb R[x,f] : \fb \in
\mathcal{I}(H)\right\}$. One of the main results of \cite{ga} is
Corollary 3.11, which states that $\mathcal{I}(H)$ is finite if $H$
is ($x$-torsion-free and) Artinian (or Noetherian) as an $R$-module.

In \cite[\S 4]{ga}, the author applied these ideas in the case where
$R$ is an $F$-injective Gorenstein local ring, with maximal ideal
$\fm$ and dimension $d$, to the `top' local cohomology module $H :=
H^d_{\fm}(R)$ of $R$. (Recall that every local cohomology module of
$R$ has a natural structure as a left $R[x,f]$-module.) The
statement that $R$ is $F$-injective implies that $H$, with its
natural structure as a left $R[x,f]$-module, is $x$-torsion-free,
and this implies that $R$ must be reduced. Note also that, in this
case, $H^d_{\fm}(R) \cong E_R(R/\fm)$. The ideas of \cite[\S 4]{ga}
yield the finite set $\mathcal{I}(H)$ of radical ideals of $R$. Let
$\fb$ denote the smallest ideal of positive height in
$\mathcal{I}(H)$ (interpret $\height R$ as $\infty$). In
\cite[Corollary 4.7]{ga} it was shown that, if $c$ is any element of
$\fb\cap R^{\circ}$, then $c$ is a test element for $R$, and that
$\fb$ is the test ideal $\tau(R)$ of $R$ (that is (in this case),
the ideal of $R$ generated by all test elements of $R$). It should
be noted that these results were obtained without the assumption
that $R$ is excellent.

This paper partially generalizes the above-described results of
\cite[Corollary 4.7]{ga} to the case where $R$ is local, with
maximal ideal $\fm$, and $E := E_R(R/\fm)$ carries a structure of
$x$-torsion-free left $R[x,f]$-module. In this situation, which is
more general than that of \cite[Corollary 4.7]{ga}, we again use
results from \cite[\S 4]{ga} that yield the finite set
$\mathcal{I}(E)$ of radical ideals of $R$. One of the main results
of this paper is that, if $\fb$ is the smallest ideal of positive
height in $\mathcal{I}(E)$, then each element of $\fb \cap
R^{\circ}$ is a test element (for modules) for $R$.

A byproduct is an analogue of a result of Janet Cowden Vassilev in
\cite[\S 3]{Cowde98}: she showed, in the case where $R$ is an
$F$-pure homomorphic image of an $F$-finite regular local ring, that
there exists a strictly ascending chain $0 = \tau_0 \subset \tau_1
\subset \cdots \subset \tau_t = R$ of radical ideals of $R$ such
that, for each $i = 0, \ldots, t-1$, the reduced local ring
$R/\tau_i$ is $F$-pure and its test ideal (has positive height and)
is exactly $\tau_{i+1}/\tau_i$. This paper presents an analogous
result in the case where $R$ is complete (but not necessarily
$F$-finite) and $E$ has a structure as an $x$-torsion-free left
$R[x,f]$-module.

\section{Graded left modules over the Frobenius skew polynomial ring}

This paper builds on the results of \cite{ga}, and we shall use the
notation and terminology of \S 1 of that paper.

\begin{ntn}
\label{nt.1} The notation introduced in the Introduction will be
maintained.

The symbols $\fa$ and $\fb$ will always denote ideals of $R$. We
shall only assume that $R$ is local when this is explicitly stated;
then, the notation `$(R,\fm)$' will denote that $\fm$ is the maximal
ideal of $R$.

Let $H$ be a left $R[x,f]$-module. Recall from \cite[1.5]{ga} that
an $R[x,f]$-submodule of $H$ is said to be a {\em special
annihilator submodule of $H$\/} if it has the form $\ann_H(\fB)$ for
some {\em graded\/} two-sided ideal $\fB$ of $R[x,f]$. As in
\cite{ga}, we shall use $\mathcal{A}(H)$ to denote the set of
special annihilator submodules of $H$.

The {\em $x$-torsion submodule $\Gamma_x(H)$ of $H$,\/} and the
concept of {\em $x$-torsion-free left $R[x,f]$-module,\/} were
defined in \cite[1.2]{ga}. The left $R[x,f]$-module $H/\Gamma_x(H)$
is automatically $x$-torsion-free.

For $n \in \Z$, we shall denote the $n$th component of a $\Z$-graded
left $R[x,f]$-module $G$ by $G_n$. If $\phi : L \lra M$ is a
homogeneous homomorphism of $\Z$-graded left $R[x,f]$-modules (of
degree $0$), then the notation $\phi = \bigoplus_{n\in\Z}\phi_n:
\bigoplus_{n\in\Z}L_n \lra \bigoplus_{n\in\Z}M_n$ will indicate that
$\phi_n : L_n \lra M_n$ is the restriction of $\phi$ to $L_n$ (for
all $n \in\Z$). For $t \in \Z$, we shall denote the {\em $t$th shift
functor\/} on the category of ($\Z$-)graded left $R[x,f]$-modules
and homogeneous homomorphisms (of degree $0$) by $ (\:
{\scriptscriptstyle \bullet} \:)(t)$: thus, for a graded left
$R[x,f]$-module $M = \bigoplus _{n \in \Z} M_{n}$, we have
$(M(t))_{n} = M_{n+t}$ for all $n \in \Z$; also, $f(t)\lceil\:
_{(M(t))_{n}} = f\lceil\: _{M_{n+t}}$ for each morphism $f$ in the
above-mentioned category and all $n \in \Z$\@.

\end{ntn}

In this paper, much use will be made of results in \S 1 and \S 3 of
\cite{ga}. There now follow brief reminders of some of those results
that are particularly important for this paper.

\begin{lem}[{\cite[Lemma 1.7(v)]{ga}}]\label{ga1.7} There
is an order-reversing bijection, $\Gamma\/,$ from the set
$\mathcal{A}(H)$ of special annihilator submodules of $H$ to the set
$\mathcal{G}(H)$ of graded annihilators of submodules of $H$ given
by
$$
\Gamma : N \longmapsto \grann_{R[x,f]}N.
$$
The inverse bijection, $\Gamma^{-1},$ also order-reversing, is given
by
 $$
\Gamma^{-1} : \fB \longmapsto \ann_H(\fB).
$$
\end{lem}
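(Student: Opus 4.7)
The plan is to treat this as a standard Galois-connection statement between the poset of graded two-sided ideals of $R[x,f]$ (restricted to $\mathcal{G}(H)$) and the poset of $R[x,f]$-submodules of $H$ (restricted to $\mathcal{A}(H)$). The two order-reversing operations $N \mapsto \grann_{R[x,f]}N$ and $\fB \mapsto \ann_H(\fB)$ form such a connection in the obvious way, and the bijection claim amounts to showing that the two closure operators one gets by composing them are the identity on $\mathcal{A}(H)$ and on $\mathcal{G}(H)$ respectively.

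First I would check well-definedness. If $N \in \mathcal{A}(H)$, then $\grann_{R[x,f]}N$ is (by the definition recalled from \cite[1.5]{ga}) a graded two-sided ideal of $R[x,f]$ that is the graded annihilator of a submodule of $H$, hence lies in $\mathcal{G}(H)$. Conversely if $\fB \in \mathcal{G}(H)$, then $\fB$ is graded two-sided, so $\ann_H(\fB)$ is by definition a special annihilator submodule. The order-reversing property is immediate: annihilators invert inclusions on both sides.

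Next I would verify $\Gamma^{-1}\circ\Gamma = \Id_{\mathcal{A}(H)}$. Let $N = \ann_H(\fC)$ for some graded two-sided ideal $\fC$. Since $\grann_{R[x,f]}N$ annihilates $N$, we have $N \subseteq \ann_H(\grann_{R[x,f]}N)$. In the other direction, $\fC$ annihilates $N$ and $\grann_{R[x,f]}N$ is the largest graded two-sided ideal with this property, so $\fC \subseteq \grann_{R[x,f]}N$, whence $\ann_H(\grann_{R[x,f]}N) \subseteq \ann_H(\fC) = N$. Thus the two sides agree.

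Finally I would verify $\Gamma\circ\Gamma^{-1} = \Id_{\mathcal{G}(H)}$. Let $\fB = \grann_{R[x,f]}N_0$ for some submodule $N_0 \subseteq H$, and set $N := \ann_H(\fB)$. Then $\fB$ annihilates $N$, so $\fB \subseteq \grann_{R[x,f]}N$. On the other hand, $\fB$ annihilates $N_0$ by construction, so $N_0 \subseteq \ann_H(\fB) = N$, and since the graded annihilator is inclusion-reversing in the submodule, $\grann_{R[x,f]}N \subseteq \grann_{R[x,f]}N_0 = \fB$. Hence $\grann_{R[x,f]}N = \fB$, completing the proof. There is no real obstacle here: the whole argument is formal, and the only substantive input is the defining property of $\grann_{R[x,f]}N$ as the \emph{largest} graded two-sided ideal annihilating $N$, which is what keeps the two compositions pinned to the identity rather than merely to closure operators.
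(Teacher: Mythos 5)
Your proof is correct. Since this lemma is simply recalled from \cite[Lemma 1.7(v)]{ga} and not reproved in the present paper, there is no in-paper proof to compare against, but the standard Galois-connection argument you give is the natural one: the only nontrivial input is the maximality of $\grann_{R[x,f]}N$ among graded two-sided ideals annihilating $N$, and you use it correctly in both compositions to pin the closure operators to the identity on $\mathcal{A}(H)$ and $\mathcal{G}(H)$ respectively.
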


\begin{lem}[{\cite[Lemma 1.9]{ga}}]\label{ga1.9} Let $H$ be an $x$-torsion-free
left $R[x,f]$-module. Then there is a radical ideal $\fb$ of $R$
such that $\grann_{R[x,f]}H = \fb R[x,f] = \bigoplus _{n\in\nn} \fb
x^n.$
\end{lem}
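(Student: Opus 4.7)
The plan is to work out the structure of a general graded two-sided ideal of $R[x,f]$, use the $x$-torsion-freeness of $H$ to pin down the coefficient of $\grann_{R[x,f]}H$ in each degree, and finally verify that the resulting ideal $\fb$ of $R$ is radical.

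First I would observe that any graded two-sided ideal of $R[x,f]$ has the form $\bigoplus_{n\in\nn} I_n x^n$ for an ascending chain $I_0 \subseteq I_1 \subseteq \cdots$ of ordinary ideals of $R$: each $I_n$ is an ideal of $R$ by left multiplication (and right multiplication is automatic since $R$ is commutative), while closure under right multiplication by $x$ forces $I_n \subseteq I_{n+1}$. Setting $A_n := \{r \in R : r x^n h = 0 \text{ for all } h \in H\}$, which is itself an ideal of $R$, the key step is to show $A_n = A_0$ for every $n \in \nn$. The inclusion $A_0 \subseteq A_n$ is immediate. For the reverse, given $r \in A_n$, I would exploit the commutation rule $x^n r = r^{p^n} x^n$ in $R[x,f]$ to compute, for any $h \in H$, that $x^n(rh) = r^{p^n} x^n h = r^{p^n - 1}(r x^n h) = 0$; iterated $x$-torsion-freeness then forces $rh = 0$, so $r \in A_0$. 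Hence the largest annihilating chain is $I_n = A_0 =: \fb$ for all $n$, giving $\grann_{R[x,f]}H = \fb R[x,f]$.

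For radicality, I would suppose $r^k \in \fb$, pick $n$ with $p^n \geq k$, and observe that $r^{p^n} h = r^{p^n - k}(r^k h) = 0$ for all $h$, so $r^{p^n} \in \fb$. The same commutation trick then gives $x^n(rh) = r^{p^n} x^n h = 0$ for every $h \in H$, and one last appeal to $x$-torsion-freeness yields $rh = 0$, so $r \in \fb$. The main obstacle, I expect, is spotting the interplay between the commutation $x^n r = r^{p^n} x^n$ and $x$-torsion-freeness; this pairing lets one convert a degree-$n$ (or Frobenius-power) annihilation statement into a degree-$0$ one, and it drives both the equality $A_n = A_0$ and the radicality of $\fb$.
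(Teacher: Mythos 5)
Your proof is correct. Since Lemma~\ref{ga1.9} is quoted here from \cite[Lemma 1.9]{ga} without a proof being reproduced in this paper, there is no in-text argument to compare against verbatim; but the argument you give is the standard (and essentially the only natural) one, and it matches the approach underlying \cite{ga}: describe a graded two-sided ideal as $\bigoplus_{n}I_nx^n$ with $(I_n)$ an ascending chain of ideals of $R$, use the commutation $x^nr=r^{p^n}x^n$ together with $x$-torsion-freeness to show each coefficient $A_n:=\{r\in R: rx^nH=0\}$ collapses to $A_0=(0:_RH)$, and then reuse the same device (choose $n$ with $p^n\ge k$) to see $(0:_RH)$ is radical. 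All the small verifications you need (that $\bigoplus_n A_0x^n$ really is a two-sided ideal, that $A_0\subseteq A_n$ is automatic, and that the iteration of $x$-torsion-freeness is what converts $x^n(rh)=0$ into $rh=0$) go through exactly as you indicate.
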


\begin{prop}[{\cite[Proposition 1.11]{ga}}]
\label{ga1.11} Let $H$ be an $x$-torsion-free left $R[x,f]$-module.

There is an order-reversing bijection, $\Delta : \mathcal{A}(H) \lra
\mathcal{I}(H),$ from the set $\mathcal{A}(H)$ of special
annihilator submodules of $H$ to the set $\mathcal{I}(H)$ of
$H$-special $R$-ideals given by \[ \Delta : N \longmapsto
\left(\grann_{R[x,f]}N\right)\cap R = (0:_RN). \]

The inverse bijection, $\Delta^{-1} : \mathcal{I}(H) \lra
\mathcal{A}(H),$ also order-reversing, is given by \[ \Delta^{-1} :
\fb \longmapsto \ann_H\left(\fb R[x,f])\right). \] When $N \in
\mathcal{A}(H)$ and $\fb \in \mathcal{I}(H)$ are such that
$\Delta(N) = \fb$, we shall say simply that `{\em $N$ and $\fb$
correspond}'.
\end{prop}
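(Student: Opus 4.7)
The plan is to assemble this proposition from Lemmas~\ref{ga1.7} and~\ref{ga1.9} together with the definition of $\mathcal{I}(H)$ given in the Introduction. Lemma~\ref{ga1.7} already supplies an order-reversing bijection $\Gamma : \mathcal{A}(H) \lra \mathcal{G}(H)$, so the task reduces to identifying $\mathcal{G}(H)$ with $\mathcal{I}(H)$ in the $x$-torsion-free setting and then reading off what the composed map does.

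First I would verify that $\Delta$ is well-defined. Any $N \in \mathcal{A}(H)$ is an $R[x,f]$-submodule of $H$, hence inherits $x$-torsion-freeness. Lemma~\ref{ga1.9} applied to $N$ then yields a radical ideal $\fc$ of $R$ with $\grann_{R[x,f]}N = \fc R[x,f] = \bigoplus_{n \in \nn} \fc x^n$; reading off the degree-zero component gives $\fc = \grann_{R[x,f]}N \cap R$, and since an element $r \in R$ annihilates $N$ if and only if $rN = 0$, this equals $(0 :_R N)$. By the very definition of $\mathcal{I}(H)$, the ideal $\fc$ belongs to $\mathcal{I}(H)$, so $\Delta$ does map into $\mathcal{I}(H)$.

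Next I would establish that $\fb \longleftrightarrow \fb R[x,f]$ is an order-preserving bijection between $\mathcal{I}(H)$ and $\mathcal{G}(H)$: surjectivity is Lemma~\ref{ga1.9} applied to each $R[x,f]$-submodule of $H$ (which is automatically $x$-torsion-free), while injectivity is immediate upon taking degree-zero components. Composing Lemma~\ref{ga1.7}'s bijection $\Gamma$ with this identification yields the map $\Delta$ in the statement; because $\Gamma$ is order-reversing and the identification $\fb \longleftrightarrow \fb R[x,f]$ is order-preserving, $\Delta$ is order-reversing. The formula for $\Delta^{-1}$ then follows by tracing $\fb \mapsto \fb R[x,f] \mapsto \ann_H(\fb R[x,f])$ through $\Gamma^{-1}$ as described in Lemma~\ref{ga1.7}.

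There is no substantial obstacle here; the proposition is a bookkeeping consequence of the two previously quoted lemmas together with the definition of $H$-special $R$-ideal. The one point worth emphasising is that $x$-torsion-freeness passes to arbitrary $R[x,f]$-submodules, without which Lemma~\ref{ga1.9} could not be invoked to guarantee that every element of $\mathcal{G}(H)$ has the required extended form $\fb R[x,f]$.
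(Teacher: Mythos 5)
This proposition is quoted from \cite[Proposition 1.11]{ga} and is not re-proved in the present paper, so there is no in-paper argument to compare against; I will therefore assess your derivation on its own terms. Your plan is sound: since $x$-torsion-freeness passes to arbitrary $R[x,f]$-submodules of $H$, Lemma~\ref{ga1.9} shows every member of $\mathcal{G}(H)$ has the form $\fb R[x,f]$ with $\fb$ a radical ideal, so $\fb\mapsto\fb R[x,f]$ is an order-preserving bijection from $\mathcal{I}(H)$ onto $\mathcal{G}(H)$ (a fact the Introduction already records as $\mathcal{G}(H)=\{\fb R[x,f]:\fb\in\mathcal{I}(H)\}$), and composing with $\Gamma$ and $\Gamma^{-1}$ from Lemma~\ref{ga1.7} yields exactly the stated $\Delta$ and $\Delta^{-1}$, with the order-reversing property inherited from $\Gamma$. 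The one place you move a little quickly is the identity $(\grann_{R[x,f]}N)\cap R=(0:_RN)$: the inclusion $\subseteq$ is immediate, but for $\supseteq$ you should record that $(0:_RN)R[x,f]=\bigoplus_{n\in\nn}(0:_RN)x^n$ is a graded two-sided ideal of $R[x,f]$ that annihilates $N$ --- closure under left multiplication by $x$ uses that $(0:_RN)$ is stable under $a\mapsto a^p$, and annihilation of $N$ uses $ax^n m = a(x^n m)$ with $x^n m\in N$ --- so it is contained in $\grann_{R[x,f]}N$ and hence $(0:_RN)$ lies in its degree-zero part. With that filled in, the argument is complete.
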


\begin{cor}[{\cite[Corollary 3.7]{ga}}]\label{ga3.7}
Let $H$ be an $x$-torsion-free left $R[x,f]$-module. Then the set of
$H$-special $R$-ideals is precisely the set of all finite
intersections of prime $H$-special $R$-ideals (provided one includes
the empty intersection, $R$, which corresponds to the zero special
annihilator submodule of $H$). In symbols, \[ \mathcal{I}(H) =
\left\{ \fp_1 \cap \ldots \cap \fp_t : t \in \nn \mbox{~and~} \fp_1,
\ldots, \fp_t \in \mathcal{I}(H)\cap\Spec(R)\right\}. \]
\end{cor}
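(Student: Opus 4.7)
The plan is to use the order-reversing bijection $\Delta$ of Proposition \ref{ga1.11} to turn the problem into one about special annihilator submodules, and then to exhibit, for each minimal prime of a given $\fb\in\mathcal{I}(H)$, a concrete $R[x,f]$-submodule of $H$ whose graded annihilator realises that prime.

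For the inclusion $\supseteq$, I would take prime $H$-special ideals $\fp_1,\ldots,\fp_t$ and let $N_i:=\ann_H(\fp_i R[x,f])$. Since $\fp_i\in\mathcal{I}(H)$, Proposition \ref{ga1.11} (together with Lemma \ref{ga1.7}) gives $\grann_{R[x,f]}N_i=\fp_i R[x,f]$. The sum $N_1+\cdots+N_t$ is an $R[x,f]$-submodule of $H$, and $\grann_{R[x,f]}(N_1+\cdots+N_t)=\bigcap_i\grann_{R[x,f]}N_i=\bigcap_i\fp_i R[x,f]=(\fp_1\cap\cdots\cap\fp_t)R[x,f]$, which exhibits $\fp_1\cap\cdots\cap\fp_t$ as a member of $\mathcal{I}(H)$.

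For the reverse inclusion, let $\fb\in\mathcal{I}(H)$ correspond to $N\in\mathcal{A}(H)$. By Lemma \ref{ga1.9}, $\fb$ is radical, so since $R$ is Noetherian one has $\fb=\fp_1\cap\cdots\cap\fp_t$ for its finitely many (pairwise incomparable) minimal primes $\fp_1,\ldots,\fp_t$; it suffices to show each $\fp_i\in\mathcal{I}(H)$. By prime avoidance, pick $s_i\in\bigl(\bigcap_{j\ne i}\fp_j\bigr)\setminus\fp_i$ and form the $R$-submodule $s_iN\subseteq N$. Because $x(s_in)=s_i^p(xn)\in s_iN$, this is an $R[x,f]$-submodule of $H$. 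I would then verify that $\fp_i R[x,f]$ annihilates $s_iN$: for $r\in\fp_i$, $k\in\nn$ and $n\in N$, one has $(rx^k)(s_in)=(rs_i^{p^k})x^kn$, and $rs_i^{p^k}$ lies in each $\fp_j$ (in $\fp_i$ because $r\in\fp_i$, and in $\fp_j$ for $j\ne i$ because $s_i\in\fp_j$), hence in $\fb$; therefore $rs_i^{p^k}x^k\in\fb R[x,f]=\grann_{R[x,f]}N$ kills $n$. Thus $s_iN\subseteq\ann_H(\fp_i R[x,f])=:N'_i$, and by Proposition \ref{ga1.11} the submodule $N'_i$ corresponds to some $\fq_i\in\mathcal{I}(H)$ with $\fp_i\subseteq\fq_i$. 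For the reverse inclusion $\fq_i\subseteq\fp_i$: any $r\in\fq_i$ annihilates $N'_i\supseteq s_iN$, so $rs_in=0$ for every $n\in N$, giving $rs_i\in(0:_RN)=\fb\subseteq\fp_i$; primality of $\fp_i$ together with $s_i\notin\fp_i$ then forces $r\in\fp_i$. Hence $\fp_i=\fq_i\in\mathcal{I}(H)$, completing the proof.

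The main obstacle is singling out the individual minimal prime $\fp_i$ rather than merely some $H$-special ideal containing it; the device of multiplying $N$ by the prime-avoidance element $s_i$, combined with the Frobenius commutation rule $xs_i=s_i^px$ and the identity $\grann_{R[x,f]}N=\fb R[x,f]$, is precisely the ingredient that makes the graded annihilator of $s_iN$ collapse down to $\fp_i R[x,f]$.
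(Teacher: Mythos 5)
Your proof is correct. The $\supseteq$ inclusion is handled cleanly by observing that for $R[x,f]$-submodules $N_i$ one has $\grann_{R[x,f]}(N_1+\cdots+N_t)=\bigcap_i\grann_{R[x,f]}N_i$, and the $\subseteq$ inclusion reduces, after the decomposition of the radical ideal $\fb$ into its minimal primes, to the key claim that each minimal prime of an $H$-special ideal is itself $H$-special; this is precisely the content of \cite[Theorem 3.6]{ga}, which is what the paper's own one-line deduction of Corollary 3.7 rests on. Your prime-avoidance device --- choosing $s_i\in\bigl(\bigcap_{j\ne i}\fp_j\bigr)\setminus\fp_i$, noting that $s_iN$ is an $R[x,f]$-submodule thanks to the commutation $x s_i=s_i^p x$, verifying $\fp_iR[x,f]\cdot(s_iN)=0$ via $rs_i^{p^k}\in\fb$, and then sandwiching $\fp_i\subseteq\fq_i\subseteq\fp_i$ using $(0:_RN)=\fb$ and primality --- is essentially the same argument used to prove that ingredient, so the overall route is the one the paper takes, merely with the appeal to Theorem 3.6 unpacked into a self-contained argument. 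Two cosmetic remarks: the case $\fb=R$ (the empty intersection, $t=0$, corresponding to $N=0$) should be acknowledged as trivial before passing to minimal primes; and the existence of $s_i$ is really a consequence of the pairwise incomparability of the minimal primes together with primality of $\fp_i$, rather than the prime avoidance lemma per se, though the conclusion is of course correct.
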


\begin{thm}[{\cite[Corollary 3.11]{ga}}]
\label{ga3.11} Suppose that $H$ is an $x$-torsion-free left
$R[x,f]$-module that is either Artinian or Noetherian as an
$R$-module. Then the set $\mathcal{I}(H)$ of $H$-special $R$-ideals
is finite.
\end{thm}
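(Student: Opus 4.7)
The plan is to reduce the problem to a finiteness statement for the \emph{prime} $H$-special $R$-ideals, and then to bound those primes via associated or attached primes of $H$ as an $R$-module. By Corollary~\ref{ga3.7}, every member of $\mathcal{I}(H)$ is a finite intersection of primes in $\mathcal{I}(H) \cap \Spec(R)$ (allowing the empty intersection $R$), and the set of finite subsets of a finite set is itself finite. So it suffices to prove that $\mathcal{P} := \mathcal{I}(H) \cap \Spec(R)$ is finite.

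For each $\fp \in \mathcal{P}$, Proposition~\ref{ga1.11} supplies the corresponding special annihilator submodule $N_\fp := \ann_H(\fp R[x,f]) \in \mathcal{A}(H)$, and guarantees $\ann_R N_\fp = \fp$. The aim is now to show that every such $\fp$ is drawn from a fixed finite set of primes attached to $H$.

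In the Noetherian case the argument is clean: $N_\fp$ is a finitely generated $R$-submodule of $H$ whose annihilator is prime. Since the annihilator of a finitely generated module over a Noetherian ring is the intersection of its finitely many minimal associated primes, and $\fp$ itself is prime, one of those associated primes must equal $\fp$. Hence $\fp \in \Ass_R(N_\fp) \subseteq \Ass_R(H)$, and the latter set is finite, bounding $|\mathcal{P}|$.

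In the Artinian case, the parallel argument via attached primes gives $\fp \in \Att_R(N_\fp)$, using that the annihilator of an Artinian module is the intersection of its finitely many attached primes, together with primality of $\fp$. The main obstacle I anticipate is that $\Att$ is not monotone for submodules, so the inclusion $\Att_R(N_\fp) \subseteq \Att_R(H)$ is not automatic (unlike its $\Ass$ counterpart). I would overcome this in one of two ways. One option is a Matlis-duality reduction: pass to the completion and dualise, converting the Artinian $x$-torsion-free $R[x,f]$-module $H$ into a finitely generated object and carrying $\mathcal{A}(H)$ to a set of quotients, so that the Noetherian case applies. Alternatively, one can try to realise each $\fp \in \mathcal{P}$ as the annihilator of a \emph{quotient} of $H$ by a special annihilator submodule; attached primes \emph{are} monotone under quotients, so combined with the order-reversing bijection of Lemma~\ref{ga1.7} and a chain condition in $\mathcal{A}(H)$, this should confine the primes in $\mathcal{P}$ to a finite set determined by $H$.
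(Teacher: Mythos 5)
The reduction via Corollary~\ref{ga3.7} to showing that $\mathcal{P}:=\mathcal{I}(H)\cap\Spec(R)$ is finite is exactly right, and your treatment of the Noetherian case is correct and clean: since $\Delta(N_\fp)=(0:_RN_\fp)=\fp$ is prime, $\fp$ is the unique minimal prime of $\Supp_RN_\fp$ and hence lies in $\Ass_R(N_\fp)\subseteq\Ass_R(H)$, a finite set. (The paper itself does not reprove Corollary~3.11 of \cite{ga}, so there is no in-text proof to compare against; I am judging your argument on its own.)

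The Artinian case, however, contains a genuine gap, and you have correctly put your finger on it --- but neither of the proposed repairs is carried through, and the difficulty is more serious than the write-up suggests. The naive parallel ``$\fp\in\Att_R(H)$'' is simply false: take $R=k[[y]]$ with maximal ideal $\fm=(y)$ and $H=E_R(R/\fm)\cong H^1_\fm(R)$ with its natural $x$-torsion-free $R[x,f]$-structure; then $\fm$ is $H$-special (it is the graded annihilator of the socle, which is an $R[x,f]$-submodule), yet $\Att_R(H)=\{0\}$. So $\mathcal{P}\not\subseteq\Att_R(H)$, and you really do need something beyond the $\Ass$/$\Att$ dictionary. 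Your first repair --- Matlis duality --- is not a direct reduction to the Noetherian case, because the Matlis dual of a left $R[x,f]$-module is not again a left $R[x,f]$-module: the $p$-linear map $x\colon H\to H$ dualises to a $p^{-1}$-linear (Cartier-type) operator on $H^\vee$, so the Noetherian form of the theorem does not apply to $H^\vee$ as stated; one would have to build a parallel ``special ideal'' theory on the dual side and prove the transfer of $\mathcal{I}(H)$, which is nontrivial extra work. Your second repair --- realising each $\fp\in\mathcal{P}$ as the annihilator of a quotient of $H$ by a special annihilator submodule, then appealing to monotonicity of $\Att$ under quotients together with DCC in $\mathcal{A}(H)$ --- is stated only as a hope; you do not produce such a quotient, and DCC plus ACC plus ``$\Att$ monotone under quotients'' do not by themselves rule out infinitely many pairwise-incomparable $H$-special primes (the intersection argument that closes off infinite antichains in the Noetherian case uses DCC on $\mathcal{I}(H)$, which you only have ACC on when $H$ is Artinian). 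As it stands, the Artinian half of the theorem is not proved.
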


By Proposition \ref{ga1.11}, for an $x$-torsion-free left
$R[x,f]$-module $H$, the set $\mathcal{I}(H)$ is finite if and only
if the set $\mathcal{A}(H)$ of special annihilator submodules of $H$
is finite.

The terminology in the following definition is inspired by the
Hartshorne--Speiser--Lyubeznik Theorem: see Lyubeznik
\cite[Proposition 4.4]{Lyube97} and compare R. Hartshorne and R.
Speiser \cite[Proposition 1.11]{HarSpe77}.

\begin{defi}[{\cite[Definition 3.15]{ga}}]
\label{ga3.15} Let $H$ be a left $R[x,f]$-module. We say that $H$
{\em admits an HSL-number\/} if there exists $e \in \nn$ such that
$x^e\Gamma_x(H) = 0$; then we call the smallest such $e$ the {\em
HSL-number\/} of $H$, and denote this by $\HSL(H)$.

If there is no such non-negative integer, that is, if $H$ does not
admit an $\HSL$-number, then we shall write $\HSL(H) = \infty$.
\end{defi}

In this terminology, the conclusion of the
Hartshorne--Speiser--Lyubeznik Theorem is that, when $(R,\fm)$ is
local, a left $R[x,f]$-module that is Artinian as an $R$-module
admits an $\HSL$-number.

We end this section by showing that the results from \cite{ga}
quoted above lead quickly to a generalization of a result of F.
Enescu and M. Hochster \cite[Theorem 3.7]{EneHocarX}.

\begin{prop}\label{gor.3} Suppose that the local ring $(R,\fm)$ is
complete, and suppose that $E := E_R(R/\fm)$ has a structure as left
$R[x,f]$-module. Then every $R[x,f]$-submodule of $E$ is a special
annihilator submodule.

Consequently, if $E$ is $x$-torsion-free, then there are only
finitely many $R[x,f]$-submodules of $E$, and, for each $E$-special
$R$-ideal $\fb$, we have $\ann_E\left(\bigoplus_{n\in\nn}\fb
x^n\right) = \ann_E(\fb)$.
\end{prop}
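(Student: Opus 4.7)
The plan is to pivot everything on Matlis duality, which is available because $(R,\fm)$ is complete. Matlis duality tells us that for \emph{any} $R$-submodule $M \subseteq E$ one has $M = \ann_E(\ann_R M)$; this is the only nontrivial fact beyond the $R[x,f]$-theoretic results of \cite{ga} quoted above.

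For the first assertion, I would take an arbitrary $R[x,f]$-submodule $N$ of $E$ and set $\fb := (0:_R N) = \ann_R N$. By Matlis duality, $N = \ann_E(\fb)$. The claim is that in fact $N = \ann_E\bigl(\bigoplus_{n\in\nn}\fb x^n\bigr)$, so that $N$ is a special annihilator submodule in the sense of \ref{nt.1}. The inclusion $\ann_E\bigl(\bigoplus_{n\in\nn}\fb x^n\bigr) \subseteq \ann_E(\fb) = N$ is obtained by taking the degree-$0$ component $\fb x^0 = \fb$. For the reverse inclusion, let $e \in N$ and $b \in \fb$; since $N$ is stable under the action of $x$, we have $x^n e \in N$ for every $n \in \nn$, and therefore $(bx^n)e = b(x^n e) = 0$ because $b$ annihilates $N$. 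This gives $N \subseteq \ann_E\bigl(\bigoplus_{n\in\nn}\fb x^n\bigr)$ and completes the first claim.

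For the first consequence, assume $E$ is $x$-torsion-free. By what has just been proved, every $R[x,f]$-submodule of $E$ lies in $\mathcal{A}(E)$, and conversely every element of $\mathcal{A}(E)$ is an $R[x,f]$-submodule. So the set of $R[x,f]$-submodules of $E$ coincides with $\mathcal{A}(E)$, which by Proposition \ref{ga1.11} is in bijection with $\mathcal{I}(E)$. Since $E$ is Artinian as an $R$-module, Theorem \ref{ga3.11} forces $\mathcal{I}(E)$ to be finite.

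For the final identity $\ann_E\bigl(\bigoplus_{n\in\nn}\fb x^n\bigr) = \ann_E(\fb)$ with $\fb \in \mathcal{I}(E)$, I would set $N := \ann_E\bigl(\bigoplus_{n\in\nn}\fb x^n\bigr)$, the special annihilator submodule of $E$ corresponding to $\fb$ under $\Delta$ of Proposition \ref{ga1.11}. That proposition yields $\ann_R N = \fb$, and Matlis duality then gives $N = \ann_E(\ann_R N) = \ann_E(\fb)$, as required. The only mildly subtle point in the whole argument is the first paragraph, where one must remember to use the $x$-stability of $N$ to replace $\ann_E(\fb)$ (a priori only an $R$-module) by $\ann_E(\fb R[x,f])$; once this observation is in place, the rest is a direct application of the machinery of \cite{ga} combined with Matlis duality.
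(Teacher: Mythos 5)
Your proposal is correct and follows essentially the same route as the paper: both arguments pivot on the Matlis-duality fact that, over the complete local ring $R$, every $R$-submodule $M$ of $E$ satisfies $M = \ann_E((0:_R M))$, and then insert an $R[x,f]$-theoretic observation to upgrade $\ann_E(\fb)$ to $\ann_E(\fb R[x,f])$. The only cosmetic difference is that the paper sandwiches $L$ between $\ann_E(\grann_{R[x,f]} L)$ and $\ann_E(\fb_0)$, whereas you directly verify that $\fb R[x,f]$ annihilates $N$ using $x$-stability; both are valid and land in the same place.
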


\begin{proof} Let $L$ be an $R[x,f]$-submodule of $E$, and let $\fB$
be its graded annihilator. Thus there is an increasing sequence
$(\fb_n)_{n \in \nn}$ of ideals of $R$ such that $\fB =
\bigoplus_{n\in\nn}\fb_nx^n$, and $\fb_0 = (0:_RL)$. Now, by Matlis
duality (see, for example, \cite[p.\ 154]{SV}), every $R$-submodule
$M$ of $E$ satisfies $M = \ann_E((0:_RM))$. Therefore
\begin{align*}
L & \subseteq \ann_E(\grann_{R[x,f]}L)\\ & = \ann_E\left({\textstyle
\bigoplus_{n\in\nn}\fb_nx^n}\right) \subseteq \ann_E(\fb_0) =
\ann_E((0:_RL)) = L.
\end{align*}
Therefore $L = \ann_E(\grann_{R[x,f]}L) =
\ann_E\left(\bigoplus_{n\in\nn}\fb_nx^n\right) = \ann_E(\fb_0)$ and
$L$ is a special annihilator submodule of $E$.

For the claims in the final paragraph, note that it follows from the
above that \[\ann_E\left({\textstyle \bigoplus_{n\in\nn}\fb
x^n}\right) = \ann_E(\fb),\] and, by \cite[Corollary 3.11]{ga} and
the fact that $E$ is Artinian as an $R$-module, there are only
finitely many special annihilator submodules of $E$.
\end{proof}

\begin{cor}\label{gor.4} Suppose that $(R,\fm)$ is local, and that $E := E_R(R/\fm)$ has a structure as
an $x$-torsion-free left $R[x,f]$-module.  Then there are only
finitely many $R[x,f]$-submodules of $E$.
\end{cor}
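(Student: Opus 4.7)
The plan is to reduce to the complete case and then invoke Proposition \ref{gor.3}. Let $\widehat{R}$ denote the $\fm$-adic completion of $R$. Since $E$ is Artinian and $\fm$-torsion as an $R$-module, it carries a canonical $\widehat{R}$-module structure under which $E \cong E_{\widehat{R}}(\widehat{R}/\widehat{\fm})$. I would first argue that the given left $R[x,f]$-action on $E$ lifts to a left $\widehat{R}[x,f]$-action, and that $E$ remains $x$-torsion-free over $\widehat{R}[x,f]$, since the additive map $e \mapsto xe$ itself is unchanged. Proposition \ref{gor.3} then yields that $E$ has only finitely many $\widehat{R}[x,f]$-submodules. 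To finish, I would observe that any $R$-submodule $L$ of the $\fm$-torsion module $E$ is automatically an $\widehat{R}$-submodule: for $\hat{r} \in \widehat{R}$ and $\ell \in L$, the element $\hat{r}\ell$ coincides with $r\ell$ for any $r \in R$ that is congruent to $\hat{r}$ modulo a sufficiently large power of $\widehat{\fm}$. Hence the $R[x,f]$-submodule lattice of $E$ agrees with the $\widehat{R}[x,f]$-submodule lattice, and the desired finiteness follows.

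The heart of the argument is the lift of the $R[x,f]$-structure. For $\hat{r} \in \widehat{R}$ and $e \in E$, I would choose $K$ so that $\widehat{\fm}^K$ annihilates both $e$ and $xe$, and pick $r \in R$ with $\hat{r} - r \in \widehat{\fm}^K$ (possible since $R \to \widehat{R}/\widehat{\fm}^K$ is surjective). Then $\hat{r}e = re$, so $x(\hat{r}e) = x(re) = r^p \cdot xe$. Since $R$ has characteristic $p$, $\hat{r}^p - r^p = (\hat{r} - r)^p \in \widehat{\fm}^{pK} \subseteq \widehat{\fm}^K$, whence $r^p \cdot xe = \hat{r}^p \cdot xe$. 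Combining gives $x(\hat{r}e) = \hat{r}^p \cdot xe$, which is the required skew-commutation rule over $\widehat{R}$, and uniqueness of the lift is automatic since $\widehat{R}e = Re$ for each $e \in E$. The main obstacle is this semilinearity check, but it reduces to the characteristic-$p$ identity $(\hat{r} - r)^p = \hat{r}^p - r^p$ together with the $\fm$-torsion property of $E$, rather than any deeper structural input.
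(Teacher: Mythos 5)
Your argument is correct and follows essentially the same route as the paper: extend the $R[x,f]$-module structure on $E$ to an $\widehat{R}[x,f]$-module structure using the canonical $\widehat{R}$-action on the $\fm$-torsion module $E$, observe the submodule lattices coincide, and invoke Proposition \ref{gor.3}. The paper simply asserts that the semilinearity $x(\widehat{r}h)=\widehat{r}^p(xh)$ is ``easy to see,'' whereas you spell out the verification via the characteristic-$p$ identity $(\widehat{r}-r)^p=\widehat{r}^p-r^p$, but there is no substantive difference in the approach.
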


\begin{proof} Denote the completion of $R$ by $(\widehat{R},\widehat{\fm})$.
Recall the natural $\widehat{R}$-module structure on the Artinian
$R$-module $E$: given $h \in E$, there exists $t \in \N$ such that
$\fm^th = 0$; for an $\widehat{r} \in \widehat{R}$, choose any $r
\in R$ such that $\widehat{r} - r \in \fm^t\widehat{R}$; then
$\widehat{r}h = rh$. It is easy to see from this that $x\widehat{r}h
= \widehat{r}^pxh$ for all $h \in E$ and $\widehat{r} \in
\widehat{R}$; thus $E$ inherits a structure as left
$\widehat{R}[x,f]$-module that extends both its $R[x,f]$-module and
$\widehat{R}$-module structures. In particular, $E$ is
$x$-torsion-free as left $\widehat{R}[x,f]$-module.

It is easy to use \cite[10.2.10]{LC} (for example) to see that, when
$E$ is regarded as an $\widehat{R}$-module in this way, then it is
isomorphic to $E_{\widehat{R}}(\widehat{R}/\widehat{\fm})$. Since a
subset of $E$ is an $R[x,f]$-submodule if and only if it is an
$\widehat{R}[x,f]$-submodule, it is enough for us to prove the claim
under the additional assumption that $R$ is complete; in that case,
the desired conclusion follows from Proposition \ref{gor.3}.
\end{proof}

As a corollary, we obtain a result that has already been established
by F. Enescu and M. Hochster. Recall that the $d$-dimensional local
ring $(R,\fm)$ is said to be {\em quasi-Gorenstein\/} precisely when
the top local cohomology module $H^d_{\fm}(R)$ is isomorphic to
$E_R(R/\fm)$.

\begin{cor}[{Enescu--Hochster \cite[Theorem 3.7]{EneHocarX}}]\label{gor.4a}
Suppose that the $d$-dimensional local ring
$(R,\fm)$ is quasi-Gorenstein and $F$-pure. Then $H^d_{\fm}(R)$,
regarded as an $R[x,f]$-module in the natural way, has only finitely
many $R[x,f]$-submodules.

In particular, if $(R,\fm)$ is Gorenstein and $F$-pure, then
$H^d_{\fm}(R)$ has only finitely many $R[x,f]$-submodules.
\end{cor}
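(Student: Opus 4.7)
The plan is to reduce Corollary~\ref{gor.4a} to Corollary~\ref{gor.4} by endowing $E := E_R(R/\fm)$ with the structure of an $x$-torsion-free left $R[x,f]$-module. The quasi-Gorenstein hypothesis supplies an $R$-module isomorphism $H^d_{\fm}(R) \cong E$, along which I would transport the natural $R[x,f]$-module structure on $H^d_{\fm}(R)$; this makes $E$ and $H^d_{\fm}(R)$ isomorphic as $R[x,f]$-modules, so it suffices to verify that $x$ acts injectively on $H^d_{\fm}(R)$.

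For this, I would factor multiplication by $x$ on $H^d_{\fm}(R)$ as the composition $\mu \circ \psi_{H^d_{\fm}(R)}$, where $\psi_{H^d_{\fm}(R)} : H^d_{\fm}(R) \lra Rx \otimes_R H^d_{\fm}(R)$ sends $h \mapsto x \otimes h$ and $\mu : Rx \otimes_R H^d_{\fm}(R) \lra H^d_{\fm}(R)$ is the $R$-linear structure map $rx \otimes h \mapsto r(xh)$. The $F$-purity of $R$ states precisely that $\psi_N$ is injective for every $R$-module $N$ (see the Introduction), so $\psi_{H^d_{\fm}(R)}$ is injective. I would then show that $\mu$ is in fact an isomorphism: picking a system of parameters $y_1,\ldots,y_d$ for $\fm$ and writing $H^d_{\fm}(R) = \varinjlim_n R/(y_1^n,\ldots,y_d^n)$, the commutation of $Rx \otimes_R (-)$ with direct limits, combined with the elementary identification $Rx \otimes_R R/\fa \cong R/\fa^{[p]}$ (sending $rx \otimes a$ to $ra^p$), identifies $Rx \otimes_R H^d_{\fm}(R)$ with a cofinal subsystem of the same Koszul direct limit, and identifies $\mu$ with that identification. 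Hence $\mu \circ \psi_{H^d_{\fm}(R)}$ is injective.

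Applying Corollary~\ref{gor.4} to the now $x$-torsion-free $R[x,f]$-module $E$ produces only finitely many $R[x,f]$-submodules of $E$, and hence, by transport of structure, of $H^d_{\fm}(R)$. The ``in particular'' assertion is immediate, since every Gorenstein local ring is quasi-Gorenstein. The main technical obstacle is the identification of $\mu$ as an isomorphism: it requires careful tracking of the Frobenius-twisted transition maps in the direct-limit description of $H^d_{\fm}(R)$, but once those are set up the remainder of the argument is formal.
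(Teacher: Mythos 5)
Your proposal is correct and takes essentially the same route as the paper: reduce to Corollary~\ref{gor.4} by noting that $H^d_{\fm}(R) \cong E_R(R/\fm)$ as $R$-modules and that $F$-purity forces the natural $R[x,f]$-module structure on $H^d_{\fm}(R)$ to be $x$-torsion-free. The paper treats this last point as a known fact and states it without proof, whereas you spell out the standard argument, factoring multiplication by $x$ through the canonical isomorphism $Rx \otimes_R H^d_{\fm}(R) \cong H^d_{\fm}(R)$; this is the converse-direction analogue of the $\mu\kappa$ factorization the author records in Remark~\ref{gor.10aa}.
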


\begin{proof} This follows from Corollary \ref{gor.4} because $H^d_{\fm}(R) \cong
E_R(R/\fm)$ and the hypothesis that $R$ is $F$-pure ensures that the
natural $R[x,f]$-module structure on $H^d_{\fm}(R)$ is
$x$-torsion-free.
\end{proof}

\section{New left $R[x,f]$-modules from old}\label{new}

The purpose of this section is to introduce some methods for the
construction of new left $R[x,f]$-modules from old, and to describe
the sets of graded annihilators, and the $\HSL$-numbers, of (some
of) these new modules in terms of the corresponding invariants of
the original modules. Although the main applications of these ideas
in this paper will be to left $R[x,f]$-modules which are
$x$-torsion-free, it is convenient to note at the same time
conclusions that apply in the more general case.

\begin{lem}\label{nt.3} Let
$\left(H^{(\lambda)}\right)_{\lambda\in\Lambda}$ be a non-empty
family of $\Z$-graded left $R[x,f]$-modules, with gradings given by
$H^{(\lambda)}= \bigoplus_{n\in\Z}H^{(\lambda)}_n$ for each
$\lambda\in\Lambda$. For each $n \in \Z$, set $H_n :=
\prod_{\lambda\in\Lambda}H^{(\lambda)}_n$. Then the $R$-module
\[
H := \bigoplus_{n\in\Z}H_n =
\bigoplus_{n\in\Z}\left(\prod_{\lambda\in\Lambda}H^{(\lambda)}_n\right)
\]
has a natural structure as a ($\Z$-graded) left $R[x,f]$-module in
which \[x\big(h^{(\lambda)}_n\big)_{\lambda\in\Lambda} =
\big(xh^{(\lambda)}_n\big)_{\lambda\in\Lambda}\in
\prod_{\lambda\in\Lambda}H^{(\lambda)}_{n+1} \quad \mbox{for all~}
\big(h^{(\lambda)}_n\big)_{\lambda\in\Lambda} \in
\prod_{\lambda\in\Lambda}H^{(\lambda)}_n.\] This graded left
$R[x,f]$-module $H$ is, in fact, the product of
$\left(H^{(\lambda)}\right)_{\lambda\in\Lambda}$ in the category of
$\Z$-graded left $R[x,f]$-modules and homogeneous
$R[x,f]$-homomorphisms\/ {\rm (}of degree $0${\rm )}; however, to
avoid possible confusion, we shall denote the module $H$ by
$\prod^{\prime}_{\lambda\in\Lambda}H^{(\lambda)}$.
\end{lem}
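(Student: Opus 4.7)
The plan is to verify the lemma by a straightforward component-wise argument, using in an essential way the fact that $x$ on each $H^{(\lambda)}$ is homogeneous of degree $1$, so that the product can be taken degree-by-degree.

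First, I would observe that, for each $n \in \Z$, the set $H_n = \prod_{\lambda \in \Lambda} H^{(\lambda)}_n$ inherits a natural $R$-module structure from the component $R$-module structures on the $H^{(\lambda)}_n$, with $R$ acting by $r(h^{(\lambda)}_n)_{\lambda\in\Lambda} = (rh^{(\lambda)}_n)_{\lambda\in\Lambda}$. Since $x$ maps $H^{(\lambda)}_n$ into $H^{(\lambda)}_{n+1}$ for every $\lambda$, the rule $x(h^{(\lambda)}_n)_{\lambda\in\Lambda} = (xh^{(\lambda)}_n)_{\lambda\in\Lambda}$ gives a well-defined additive map $H_n \to H_{n+1}$. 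Extending this $R$-linearly and then additively across the direct sum $H = \bigoplus_{n\in\Z} H_n$ defines the action of an arbitrary polynomial in $R[x,f]$. The skew polynomial relation $xr = r^px$ then has to be checked on an arbitrary element $(h^{(\lambda)}_n)_{\lambda\in\Lambda} \in H_n$: both sides yield $(r^p x h^{(\lambda)}_n)_{\lambda\in\Lambda}$, because the relation holds in each $H^{(\lambda)}$. Associativity and distributivity of the action likewise reduce to the component identities.

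Next, I would verify the universal property that identifies $H$ as the categorical product of the $H^{(\lambda)}$ in the category of $\Z$-graded left $R[x,f]$-modules with homogeneous homomorphisms of degree $0$. The candidate projections are $\pi^{(\mu)} = \bigoplus_{n\in\Z} \pi^{(\mu)}_n : H \to H^{(\mu)}$, where $\pi^{(\mu)}_n$ is the usual set-theoretic projection from $\prod_{\lambda\in\Lambda}H^{(\lambda)}_n$ onto $H^{(\mu)}_n$; these are clearly $R$-linear and commute with $x$ component-wise, so are morphisms in the relevant category. Given any $\Z$-graded left $R[x,f]$-module $G$ and any family of homogeneous degree-$0$ $R[x,f]$-homomorphisms $\phi^{(\lambda)} : G \to H^{(\lambda)}$, I would define $\phi : G \to H$ by $\phi(g_n) = (\phi^{(\lambda)}_n(g_n))_{\lambda\in\Lambda}$ for $g_n \in G_n$ and extend by additivity; uniqueness is forced by the requirement $\pi^{(\mu)}\circ\phi = \phi^{(\mu)}$, and commutation with $x$ again reduces to commutation of each $\phi^{(\lambda)}$ with $x$.

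There is no genuine obstacle; the only point deserving comment is the reason for the notation $\prod'_{\lambda\in\Lambda}H^{(\lambda)}$. In the ungraded category, the product would be the full Cartesian product $\prod_{\lambda\in\Lambda}H^{(\lambda)}$, whose elements may have nonzero components in infinitely many graded pieces simultaneously and therefore does not, in general, carry a $\Z$-grading. Taking the product one homogeneous degree at a time and then forming the external direct sum over $n \in \Z$ is the standard way to produce a product in the graded category, and the prime in $\prod'$ is included precisely to prevent the reader from confusing this construction with the larger ungraded Cartesian product.
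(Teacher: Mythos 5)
Your proof is correct and matches the intent of the paper, which simply declares the verification ``straightforward and left to the reader'' (suggesting that \cite[Lemma 1.3]{KS} can be used to streamline the check that $H$ is a left $R[x,f]$-module); your component-wise verification of the module axioms and of the universal property is exactly the straightforward argument being alluded to.
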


\begin{proof} This is straightforward and will be left to the
reader; one can use \cite[Lemma 1.3]{KS} to facilitate the
verification that $H$ has a structure as a left $R[x,f]$-module as
claimed.
\end{proof}

\begin{rmk}\label{nt.4} Let
$\left(H^{(\lambda)}\right)_{\lambda\in\Lambda}$ be a non-empty
family of $\Z$-graded left $R[x,f]$-modules, and, as in Lemma
\ref{nt.3}, set $H :=
\prod^{\prime}_{\lambda\in\Lambda}H^{(\lambda)}$. Let $\fB$ be a
graded two-sided ideal of $R[x,f]$. Then it is straightforward to
check that
\begin{enumerate}
\item $\ann_H\fB = \prod^{\prime}_{\lambda\in\Lambda}\ann_{H^{(\lambda)}}\fB$, and
\item $\grann_{R[x,f]}H = \bigcap_{\lambda\in\Lambda}
\grann_{R[x,f]}H^{(\lambda)}$.
\end{enumerate}
\end{rmk}

\begin{lem}\label{nt.5} Let
$\left(H^{(\lambda)}\right)_{\lambda\in\Lambda}$ be a non-empty
family of $\Z$-graded left $R[x,f]$-modules, and, as in Lemma\/ {\rm
\ref{nt.3}}, set $H :=
\prod^{\prime}_{\lambda\in\Lambda}H^{(\lambda)}$.

\begin{enumerate}
\item We have $\mathcal{G}(H) = \left\{ \bigcap_{\lambda \in \Lambda}\fB_{\lambda} : \fB_{\lambda} \in
\mathcal{G}(H^{(\lambda)}) \mbox{~for all~} \lambda \in
\Lambda\right\}$.
\item Consequently, if there exists a set $\mathcal{G}'$ of graded two-sided
ideals of $R[x,f]$ such that $\mathcal{G}(H^{(\lambda)}) =
\mathcal{G}'$ for all $\lambda \in \Lambda$, then $\mathcal{G}(H) =
\mathcal{G}'$.
\item We have $\HSL(H) = \sup \left\{ \HSL(H^{(\lambda)}) : \lambda \in
\Lambda\right\}$ (even if $\HSL(H^{(\mu)}) = \infty$ for some $\mu
\in \Lambda$). Thus $H$ is $x$-torsion-free if and only if
$H^{(\lambda)}$ is $x$-torsion-free for all $\lambda\in\Lambda$.
\item If $\HSL(H)$ is finite, then $\Gamma_x(H) =
\prod^{\prime}_{\lambda\in\Lambda}\Gamma_x(H^{(\lambda)})$ and there
is a homogeneous isomorphism of graded left $R[x,f]$-modules
\[
H/\Gamma_x(H) \stackrel{\cong}{\lra}
\prod_{\lambda\in\Lambda}{\textstyle ^{^{^{\Large
\prime}}}}H^{(\lambda)}/\Gamma_x(H^{(\lambda)}).
\]
\end{enumerate}
\end{lem}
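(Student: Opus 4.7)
The plan is to exploit the structure of $H$ as a categorical product: for each $\mu \in \Lambda$ there is a homogeneous $R[x,f]$-linear surjection $\pi_{\mu} : H \to H^{(\mu)}$ (the $\mu$th coordinate projection), and a homogeneous $R[x,f]$-linear injection $\iota_{\mu} : H^{(\mu)} \to H$ (placing an element in the $\mu$th coordinate and zero elsewhere); both are straightforward to verify using the formula for the action of $x$ given in Lemma \ref{nt.3}.

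For part (i), I would first establish the inclusion $\supseteq$: given $\fB_{\lambda} = \grann_{R[x,f]}M^{(\lambda)}$ for submodules $M^{(\lambda)} \leq H^{(\lambda)}$, the submodule $M := \prod^{\prime}_{\lambda\in\Lambda}M^{(\lambda)}$ of $H$ satisfies $\grann_{R[x,f]}M = \bigcap_{\lambda\in\Lambda}\fB_{\lambda}$ by Remark \ref{nt.4}(ii) applied to the family $(M^{(\lambda)})_{\lambda\in\Lambda}$. For the reverse inclusion, given any graded submodule $N \leq H$, set $N^{(\lambda)} := \pi_{\lambda}(N)$; since $\pi_{\lambda}$ is $R[x,f]$-linear, $\grann_{R[x,f]}N \subseteq \grann_{R[x,f]}N^{(\lambda)}$ for every $\lambda$, while conversely any $rx^n$ annihilating every $N^{(\lambda)}$ kills every coordinate of every element of $N$ and so annihilates $N$. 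This yields $\grann_{R[x,f]}N = \bigcap_{\lambda}\grann_{R[x,f]}N^{(\lambda)}$, with each $\grann_{R[x,f]}N^{(\lambda)} \in \mathcal{G}(H^{(\lambda)})$. Part (ii) is then immediate from (i).

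For part (iii), the containment $\Gamma_x(H) \subseteq \prod^{\prime}_{\lambda}\Gamma_x(H^{(\lambda)})$ is automatic, since a homogeneous tuple annihilated by $x^e$ has every coordinate annihilated by $x^e$. If $s := \sup_{\lambda}\HSL(H^{(\lambda)}) < \infty$, the same observation in reverse shows that $x^s$ kills every homogeneous element of $\prod^{\prime}_{\lambda}\Gamma_x(H^{(\lambda)})$, and hence every element of the smaller submodule $\Gamma_x(H)$, giving $\HSL(H) \leq s$. Conversely, each injection $\iota_{\mu}$ carries $\Gamma_x(H^{(\mu)})$ into $\Gamma_x(H)$ and preserves the exact $x$-torsion order of an element, so $\HSL(H) \geq \HSL(H^{(\mu)})$ for every $\mu$; taking the supremum yields $\HSL(H) \geq \sup_{\lambda}\HSL(H^{(\lambda)})$ and simultaneously handles the case where some $\HSL(H^{(\mu)}) = \infty$. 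The $x$-torsion-freeness criterion is then the specialisation to HSL-number $0$.

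For part (iv), assume $\HSL(H) < \infty$. Then by (iii) the HSL-numbers $\HSL(H^{(\lambda)})$ are uniformly bounded, so the inclusion noted at the start of (iii) becomes equality: $\Gamma_x(H) = \prod^{\prime}_{\lambda}\Gamma_x(H^{(\lambda)})$. The componentwise quotient maps $H^{(\lambda)} \to H^{(\lambda)}/\Gamma_x(H^{(\lambda)})$ assemble into a surjective homogeneous $R[x,f]$-homomorphism $H \to \prod^{\prime}_{\lambda}H^{(\lambda)}/\Gamma_x(H^{(\lambda)})$ whose kernel is precisely this submodule, giving the required isomorphism. The main obstacle I anticipate is the interplay between $\Gamma_x$ and infinite products: without a uniform bound, the inclusion $\Gamma_x(H) \subseteq \prod^{\prime}_{\lambda}\Gamma_x(H^{(\lambda)})$ can be strict, so one must invoke the finiteness hypothesis of (iv) precisely at the point of passing from inclusion to equality.
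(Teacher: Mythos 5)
Your treatments of parts (iii) and (iv) --- both of which the paper leaves to the reader --- are sound; in particular, you correctly identify that the uniform bound on the $\HSL(H^{(\lambda)})$ supplied by the hypothesis of (iv) is exactly what lets $\Gamma_x$ commute with $\prod^{\prime}$. For part (i) you take a more hands-on route than the paper, working with the coordinate projections $\pi_\mu$ and injections $\iota_\mu$ in place of the bijection $\Gamma$ of Lemma~\ref{ga1.7}; the $\subseteq$ direction via projections is correct (and works for an arbitrary, not necessarily graded, submodule $N$).

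The $\supseteq$ direction, however, has a gap. You pick arbitrary $R[x,f]$-submodules $M^{(\lambda)} \leq H^{(\lambda)}$ with $\fB_\lambda = \grann_{R[x,f]}M^{(\lambda)}$ and then form $\prod^{\prime}_{\lambda\in\Lambda} M^{(\lambda)}$, but the construction $\prod^{\prime}$ of Lemma~\ref{nt.3} is only defined for $\Z$-graded modules, and a submodule realizing a given graded annihilator need not be graded. The paper avoids this cleanly via Lemma~\ref{ga1.7}: since $\fB_\lambda \in \mathcal{G}(H^{(\lambda)})$, one has $\fB_\lambda = \grann_{R[x,f]}\bigl(\ann_{H^{(\lambda)}}(\fB_\lambda)\bigr)$, and $\ann_{H^{(\lambda)}}(\fB_\lambda)$ is automatically a graded submodule of $H^{(\lambda)}$, being the annihilator of a graded two-sided ideal. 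Setting $L := \prod^{\prime}_{\lambda\in\Lambda} \ann_{H^{(\lambda)}}(\fB_\lambda)$ and applying Remark~\ref{nt.4}(ii) then yields $\grann_{R[x,f]}L = \bigcap_{\lambda\in\Lambda}\fB_\lambda$, as you intended. You could instead repair your version by observing that the graded annihilator of any submodule agrees with the graded annihilator of the graded submodule generated by the homogeneous components of its elements, so that the $M^{(\lambda)}$ may be taken graded without loss of generality --- but that observation itself needs a line of justification that you have not supplied, whereas the route through $\ann_{H^{(\lambda)}}(\fB_\lambda)$ (which the paper also uses for the $\subseteq$ direction, via $\fB = \grann_{R[x,f]}(\ann_H\fB)$) requires no extra work.
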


\begin{proof} (i) Let $\fB \in \mathcal{G}(H)$. Then, by
\ref{ga1.7} and \ref{nt.4}, we have
\begin{align*} \fB & = \grann_{R[x,f]}(\ann_H(\fB)) =
\grann_{R[x,f]}\left(\prod_{\lambda\in\Lambda}{\textstyle
^{^{^{\Large
\prime}}}}\ann_{H^{(\lambda)}}\fB\right)\\
& = \bigcap_{\lambda\in\Lambda}
\grann_{R[x,f]}(\ann_{H^{(\lambda)}}\fB)\\
& \in \left\{ \bigcap_{\lambda \in \Lambda}\fB_{\lambda} :
\fB_{\lambda} \in \mathcal{G}(H^{(\lambda)}) \mbox{~for all~}
\lambda \in \Lambda\right\}.
\end{align*}

Next, for each $\lambda \in \Lambda$, let $\fB_{\lambda} \in
\mathcal{G}(H^{(\lambda)})$. Then $\fB_{\lambda} =
\grann_{R[x,f]}(\ann_{H^{(\lambda)}}(\fB_{\lambda}))$, by
\ref{ga1.7}. Set $L :=
\prod^{\prime}_{\lambda\in\Lambda}\ann_{H^{(\lambda)}}(\fB_{\lambda})$,
a graded $R[x,f]$-submodule of $H$. Then, by \ref{nt.4},
\[
\grann_{R[x,f]}L = \bigcap_{\lambda\in\Lambda}
\grann_{R[x,f]}(\ann_{H^{(\lambda)}}(\fB_{\lambda})) =
\bigcap_{\lambda\in\Lambda} \fB_{\lambda},
\]
and so $\bigcap_{\lambda\in\Lambda} \fB_{\lambda} \in
\mathcal{G}(H)$.

(ii) This is now immediate from part (i), because each
$\mathcal{G}(H^{(\lambda)})$ (for a $\lambda \in \Lambda$) is closed
under taking arbitrary intersections.

(iii),(iv) These are straightforward, and left to the reader.
\end{proof}

\begin{lem}\label{nt.6} Let $H$ be a left $R[x,f]$-module.
Let $\fB$ be a two-sided ideal of $R[x,f]$. For all $n \in \nn$, set
$H_n := H$. Then the $R$-module $\widetilde{H} :=
\bigoplus_{n\in\nn} H_n$ has a natural structure as a graded left
$R[x,f]$-module under which the result of multiplying $h_n \in H_n =
H$ on the left by $x$ is the element $xh_n \in H_{n+1} = H$.

Furthermore,
\begin{enumerate}
\item $\grann_{R[x,f]}(\widetilde{H}) = \grann_{R[x,f]}(H)$, and
\item $
\ann_{\widetilde{H}}\fB = \bigoplus_{n\in\nn} \ann_{H_n}\fB =
\ann_{H}\fB \oplus \ann_{H}\fB \oplus \cdots \oplus \ann_{H}\fB
\oplus \cdots = \widetilde{\ann_H\fB}$.

\end{enumerate}
\end{lem}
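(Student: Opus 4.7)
The plan is to handle the three assertions in order, each reducing to a component-wise analysis of the ``inflated'' module $\widetilde{H}$.

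First I would verify the module structure. The $R$-action on $\widetilde{H}$ is component-wise from the direct sum. To define the $x$-action, set $x \cdot h_n := (xh)_{n+1}$ for $h_n \in H_n = H$, extend $R$-linearly, and check the skew relation $xr = r^p x$: both sides compute $(r^p xh)_{n+1}$ on $h_n$, the verification reducing instantly to the same relation in $H$. The grading is built in by construction.

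For (i), I would observe that a graded two-sided ideal $\fB = \bigoplus_i \fb_i x^i$ annihilates $\widetilde{H}$ if and only if $(bx^i)\cdot h_n = (bx^i h)_{n+i} = 0$ in $H_{n+i}$ for every $b \in \fb_i$, every $h \in H$, and every $n \in \nn$; equivalently, $bx^i h = 0$ in $H$ for every homogeneous generator of $\fB$, i.e., $\fB$ annihilates $H$. Since this equivalence holds for every graded two-sided ideal, taking the largest such $\fB$ on each side yields $\grann_{R[x,f]}\widetilde{H} = \grann_{R[x,f]}H$.

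For (ii), working with (a graded) $\fB$, I would first show that $\ann_{\widetilde{H}}\fB$ is a graded submodule of $\widetilde{H}$. Given $(h_n)_n \in \ann_{\widetilde{H}}\fB$, the action of a homogeneous $s \in \fB$ of degree $i$ shifts degrees by $i$, so $s\cdot (h_n)_n = \sum_n (sh_n)_{n+i} = 0$ forces $sh_n = 0$ in $H$ for every $n$ separately. Since $\fB$ is generated by its homogeneous components, each $h_n$ is killed by all of $\fB$, so $h_n \in \ann_H\fB$. Hence $\ann_{\widetilde{H}}\fB \subseteq \bigoplus_n \ann_H\fB$. The reverse inclusion is equally immediate: if each $h_n \in \ann_H\fB$, then for any homogeneous $s \in \fB$, $s(h_n)_n = \sum_n(sh_n)_{n+i} = 0$, and this suffices because $\fB$ is generated by homogeneous elements. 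Both containments together give $\ann_{\widetilde{H}}\fB = \bigoplus_{n\in\nn}\ann_H\fB$, which is $\widetilde{\ann_H\fB}$ by the very definition of the widetilde construction.

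Main obstacle: There is no serious mathematical difficulty; the lemma records a bookkeeping fact saying that $\widetilde{H}$ is a ``diagonal'' copy of $H$ placed in every non-negative degree. The one point worth flagging is that part (ii) is really using the graded structure of $\fB$ in an essential way---the splitting of $s\cdot(h_n)_n$ into components in distinct degrees only yields the separate conditions $sh_n = 0$ for \emph{homogeneous} $s$, so I would read the statement within the graded setting that the entire section is implicitly working within.
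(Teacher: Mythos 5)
Your proof is correct and is simply a careful spelling-out of what the paper declares ``clear'' (the paper's proof reads in its entirety: ``One can use \cite[Lemma 1.3]{KS} to facilitate the verification that $\widetilde{H}$ has a structure as a left $R[x,f]$-module as claimed. The claims in (i) and (ii) are clear.''). Your closing caveat is well founded and worth keeping: as printed, the lemma says only ``two-sided ideal,'' but claim (ii) does require $\fB$ to be \emph{graded}. Indeed, for $\xi=(h_n)_n\in\widetilde{H}$ and $s=\sum_i s_i\in\fB$ with $s_i$ the degree-$i$ component, the equation $s\xi=0$ decomposes degree by degree into $\sum_i s_ih_{m-i}=0$ for each $m$, which mixes distinct $h_n$'s; this is not a consequence of $sh_n=0$ for every $n$ unless each $s_i$ already lies in $\fB$. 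Throughout the paper $\fB$ is reserved for graded two-sided ideals, and the lemma is only applied to elements of $\mathcal{G}(H)$ or $\mathcal{G}(\widetilde{H})$, so your reading is the intended one.
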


\begin{proof} One can use \cite[Lemma 1.3]{KS} to facilitate the
verification that $\widetilde{H}$ has a structure as a left
$R[x,f]$-module as claimed. The claims in (i) and (ii) are clear.
\end{proof}

\begin{lem}\label{nt.6a} Let $H$ be a left $R[x,f]$-module; set $G := H/\Gamma_x(H)$.
Let $\widetilde{H}$ be the graded left $R[x,f]$-module constructed
from $H$ as in Lemma\/ {\rm \ref{nt.6}}. Then
\begin{enumerate}
\item $\mathcal{G}(\widetilde{H}) = \mathcal{G}(H)$;
\item $\HSL(\widetilde{H}) = \HSL(H)$ and $\Gamma_x(\widetilde{H}) =
\bigoplus_{n\in\nn}\Gamma_x(H_n) = \widetilde{\Gamma_x(H)}$; and
\item there is an $R[x,f]$-isomorphism
\[
\widetilde{H}/\Gamma_x(\widetilde{H}) \cong \bigoplus_{n\in\nn}
H_n/\Gamma_x(H_n) = \widetilde{G}.
\]
\end{enumerate}
\end{lem}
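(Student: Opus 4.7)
The plan is to deduce all three parts directly from the identities already recorded in Lemma \ref{nt.6}, namely $\grann_{R[x,f]}(\widetilde{H}) = \grann_{R[x,f]}(H)$ and $\ann_{\widetilde{H}}\fB = \widetilde{\ann_H\fB}$, together with the bijection between special annihilator submodules and graded annihilators supplied by Lemma \ref{ga1.7}.

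For part (i), I would argue by double inclusion. Given $\fB \in \mathcal{G}(\widetilde{H})$, Lemma \ref{ga1.7} gives $\fB = \grann_{R[x,f]}(\ann_{\widetilde{H}}\fB)$. By Lemma \ref{nt.6}(ii) this is $\grann_{R[x,f]}(\widetilde{\ann_H \fB})$, and by Lemma \ref{nt.6}(i) (applied to the submodule $\ann_H\fB$ of $H$ in place of $H$) this equals $\grann_{R[x,f]}(\ann_H \fB)$, which visibly lies in $\mathcal{G}(H)$. Conversely, given $\fB \in \mathcal{G}(H)$, choose an $R[x,f]$-submodule $L$ of $H$ with $\grann_{R[x,f]}L = \fB$; then $\widetilde{L}$ is an $R[x,f]$-submodule of $\widetilde{H}$, and by Lemma \ref{nt.6}(i) applied to $L$, its graded annihilator is again $\fB$.

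For part (ii), the key observation is that the $x$-action on $\widetilde{H}$ carries $H_n$ into $H_{n+1}$ by the original action of $x$ on $H$; so for a homogeneous element $h_n \in H_n$, one has $x^e h_n = 0$ in $\widetilde{H}$ if and only if $x^e h_n = 0$ in $H$. Since grading is respected by the $x$-torsion functor (an element of a graded $R[x,f]$-module is $x$-torsion iff each of its homogeneous components is), it follows that $\Gamma_x(\widetilde{H}) = \bigoplus_{n\in\nn}\Gamma_x(H_n) = \widetilde{\Gamma_x(H)}$. From this identification, the smallest $e$ killing $\Gamma_x(\widetilde{H})$ is exactly the smallest $e$ killing $\Gamma_x(H)$, giving $\HSL(\widetilde{H}) = \HSL(H)$ (with the usual convention when either is $\infty$).

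Part (iii) then drops out of (ii): quotienting $\widetilde{H} = \bigoplus_{n\in\nn} H_n$ by $\Gamma_x(\widetilde{H}) = \bigoplus_{n\in\nn}\Gamma_x(H_n)$ produces $\bigoplus_{n\in\nn} H_n/\Gamma_x(H_n)$, and this direct-sum decomposition manifestly gives an isomorphism of graded left $R[x,f]$-modules onto $\widetilde{G}$. There is no substantive obstacle here; the entire lemma is a bookkeeping exercise translating the contents of Lemma \ref{nt.6} through the $\Gamma$-bijection of Lemma \ref{ga1.7}. The only point requiring mild care is the verification in (ii) that the homogeneous decomposition of the $x$-torsion is preserved, which I would note explicitly rather than leave to the reader.
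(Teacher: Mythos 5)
Your proof is correct and takes essentially the same approach as the paper: part (i) is obtained by running the graded-annihilator/special-annihilator bijection of Lemma~\ref{ga1.7} through the two identities of Lemma~\ref{nt.6}, exactly as in the paper (with only a cosmetic variation in the reverse inclusion, where you take $\widetilde{L}$ for an arbitrary $L$ with $\grann_{R[x,f]}L = \fB$ rather than $\ann_H\fB$). Parts (ii) and (iii), which the paper explicitly leaves to the reader as straightforward, are supplied by you via the correct observation that $x$-torsion in the graded module $\widetilde{H}$ is detected componentwise, and your fillings-in are sound.
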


\begin{proof} (i) Let $\fB \in \mathcal{G}(\widetilde{H})$. Then, by Lemmas \ref{ga1.7}
and \ref{nt.6}, we have \[ \fB =
\grann_{R[x,f]}(\ann_{\widetilde{H}}\fB) =
\grann_{R[x,f]}(\widetilde{\ann_H\fB}) = \grann_{R[x,f]}(\ann_H\fB)
\in \mathcal{G}(H).\] On the other hand, if $\fB \in
\mathcal{G}(H)$, then, again by Lemmas \ref{ga1.7} and \ref{nt.6},
\[
\fB = \grann_{R[x,f]}(\ann_H\fB) =
\grann_{R[x,f]}(\widetilde{\ann_H\fB}) =
\grann_{R[x,f]}(\ann_{\widetilde{H}}\fB) \in
\mathcal{G}(\widetilde{H}).
\]

(ii),(iii) These are straightforward and left to the reader.
\end{proof}

\begin{rmk}\label{nt.6b} Suppose that $(R,\fm)$ is local. Let $H := H^d_{\fm}(R)$, considered as a left
$R[x,f]$-module in the natural way (recalled in \cite[Reminder
4.1(ii)]{ga}). The isomorphisms described in \cite[Remark
4.2(iii)]{ga} show that, if we apply the construction of Lemma
\ref{nt.6} to this $H$, then the resulting graded left
$R[x,f]$-module $\widetilde{H}$ is isomorphic to
$R[x,f]\otimes_RH^d_{\fm}(R)$.
\end{rmk}

\begin{ntn}\label{nt.6n} For $t \in \N$, we refer to the
mapping $f : R^t \lra R^t$ for which $f((r_1, \ldots, r_t)) =
(r_1^p, \ldots, r_t^p)$ for all $(r_1, \ldots, r_t) \in R^t$ as the
{\em Frobenius map\/}.
\end{ntn}

\begin{lem}\label{nt.8} Let $b \in \N$ and $W = \bigoplus_{n\geq b}W_n$ be a $\Z$-graded
left $R[x,f]$-module; let $g_1, \ldots, g_t \in W_b$, and let
$$
K := \left\{ (r_1, \ldots, r_t) \in R^t : {\textstyle
\sum_{i=1}^tr_ig_i = 0}\right\}.
$$

Then there is a graded left $R[x,f]$-module
\[
W' = \bigoplus_{n\geq b-1}W'_n = \left(R^t/f^{-1}(K)\right)\oplus
W_b \oplus W_{b+1} \oplus \cdots \oplus W_i \oplus \cdots
\]
(so that $W'_{b-1} = R^t/f^{-1}(K)$ and $W'_n = W_n$ for all $n \geq
b$) which has $W$ as an $R[x,f]$-submodule and for which $x((r_1,
\ldots, r_t) + f^{-1}(K)) = \sum_{i=1}^tr_i^pg_i$ for all $(r_1,
\ldots, r_t) \in R^t$. We call $W'$ the\/ {\em $1$-place extension
of $W$ by $g_1, \ldots, g_t$}, and denote it by $$\ext(W;g_1,
\ldots, g_t;1).$$

If $W$ is $x$-torsion-free, then so too is $\ext(W;g_1, \ldots,
g_t;1)$, and then $$\mathcal{G}(\ext(W;g_1, \ldots, g_t;1)) =
\mathcal{G}(W)$$ and $\mathcal{I}(\ext(W;g_1, \ldots, g_t;1)) =
\mathcal{I}(W).$
\end{lem}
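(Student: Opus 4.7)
The plan is to verify the structural claims in order and then to isolate the graded annihilator identity as the main substantive step.

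\emph{Well-definedness, submodule structure, and torsion-freeness.} The rule $x((r_1,\ldots,r_t) + f^{-1}(K)) = \sum_{i=1}^t r_i^p g_i$ descends to $W'_{b-1} = R^t/f^{-1}(K)$ because $(r_1,\ldots,r_t) \in f^{-1}(K)$ forces $(r_1^p,\ldots,r_t^p) \in K$, whence $\sum r_i^p g_i = 0$. Together with the existing $x$-action on $W$, this defines an $R$-linear map $W'_n \to W'_{n+1}$ in each degree; the skew-commutation $x(rv) = r^p(xv)$ reduces in the new degree to the identity $\sum (rr_i)^p g_i = r^p \sum r_i^p g_i$, and \cite[Lemma 1.3]{KS} then packages these data into a genuine left $R[x,f]$-module structure. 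That $W$ is a graded $R[x,f]$-submodule of $W'$ is immediate, since $W'_n = W_n$ for $n \geq b$ and $x(W'_{b-1}) \subseteq W_b$. Assuming $W$ is $x$-torsion-free, the only new case for torsion in $W'$ is $v = (r_1,\ldots,r_t) + f^{-1}(K)$ with $xv = \sum r_i^p g_i = 0$; but then $(r_1^p,\ldots,r_t^p) \in K$, so $(r_1,\ldots,r_t) \in f^{-1}(K)$ and $v = 0$.

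\emph{The graded annihilator identity.} The inclusion $\mathcal{G}(W) \subseteq \mathcal{G}(\ext(W;g_1,\ldots,g_t;1))$ is automatic, as any $R[x,f]$-submodule of $W$ is one of $W'$ with the same intrinsic graded annihilator. For the reverse, I would take a graded $R[x,f]$-submodule $N = \bigoplus_n N_n$ of $W'$ (restricting to graded submodules is legitimate because any graded ideal annihilating a submodule also annihilates each homogeneous component of each element, by a degree argument) and set $M := N \cap W = \bigoplus_{n \geq b} N_n$, a graded $R[x,f]$-submodule of $W$. By Lemma~\ref{ga1.9}, $\grann_{R[x,f]}(M) = \fc R[x,f]$ for a radical ideal $\fc$ of $R$; since $M \subseteq N$ one has $\grann_{R[x,f]}(N) \subseteq \fc R[x,f]$, and it remains to prove the reverse containment, i.e.\ that $rx^m v = 0$ for every $r \in \fc$, every $m \in \nn$, and every $v \in N_{b-1}$. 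If $m \geq 1$ then $x^m v \in N_{b+m-1} \subseteq M$, so $r \cdot (x^m v) = 0$. If $m = 0$, write $v = (r_1,\ldots,r_t) + f^{-1}(K)$; then $xv = \sum r_i^p g_i \in N_b \subseteq M$, hence $r^p \cdot xv = 0$ (using $r^p \in \fc$), which unwinds to $\sum (rr_i)^p g_i = 0$, i.e.\ $(rr_1,\ldots,rr_t) \in f^{-1}(K)$, so $rv = 0$ in $W'_{b-1}$. Thus $\grann_{R[x,f]}(N) = \grann_{R[x,f]}(M) \in \mathcal{G}(W)$.

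\emph{From $\mathcal{G}$ to $\mathcal{I}$.} Because both $W$ and $\ext(W;g_1,\ldots,g_t;1)$ are $x$-torsion-free, Lemma~\ref{ga1.9} forces every graded annihilator of a submodule of either module to have the form $\fb R[x,f]$ with $\fb$ a radical ideal of $R$; consequently the equality $\mathcal{G}(\ext(W;g_1,\ldots,g_t;1)) = \mathcal{G}(W)$ is equivalent to $\mathcal{I}(\ext(W;g_1,\ldots,g_t;1)) = \mathcal{I}(W)$, and the latter follows at once.

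\emph{The main obstacle.} The delicate step is the $m=0$ subcase of the annihilator identity: this is the only place where the choice of $f^{-1}(K)$ (rather than $K$) genuinely enters. The hypothesis $r \in \fc$ yields $r \cdot xv = 0$ directly, but what is needed to conclude $rv = 0$ in $W'_{b-1}$ is the stronger $r^p \cdot xv = 0$; the upgrade hinges on the essentially trivial but crucial observation that $r \in \fc$ implies $r^p \in \fc$ for any ideal $\fc$.
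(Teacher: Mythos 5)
Your proposal is correct and follows essentially the same line as the paper's proof: restrict a graded submodule of $W'$ to its piece in degrees $\geq b$, identify the radical ideal $\fc$ giving its graded annihilator, and use $r\in\fc\Rightarrow r^p\in\fc$ together with the defining quotient $R^t/f^{-1}(K)$ (or, equivalently, $x$-torsion-freeness) to handle the new degree $b-1$. The only cosmetic differences are that the paper argues at the level of $\mathcal{I}$ via special annihilator submodules and finishes by invoking $x$-torsion-freeness of the submodule, whereas you argue at the level of $\mathcal{G}$ and unwind the relation $\sum(rr_i)^p g_i=0$ directly.
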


\begin{proof} One can use \cite[Lemma 1.3]{KS} to facilitate the
verification that $W'$ is a graded left $R[x,f]$-module.

Suppose that $W$ is $x$-torsion-free, and let $(r_1, \ldots, r_t)
\in R^t$ be such that the element $\eta := (r_1, \ldots, r_t) +
f^{-1}(K)$ of $W'_{b-1}$ belongs to $\Gamma_x(W')$. Then there
exists $h \in \N$ such that $x^h\eta = x^{h-1}(\sum_{i=1}^tr_i^pg_i)
= 0$. Since $W$ is $x$-torsion-free, we have $\sum_{i=1}^tr_i^pg_i =
x\eta = 0$. Therefore $f((r_1, \ldots, r_t)) \in K$, and so $\eta :=
(r_1, \ldots, r_t) + f^{-1}(K) = 0$. It follows that, if $W$ is
$x$-torsion-free, then $W'$ is $x$-torsion-free. The converse is
clear.

In order to prove the final two claims, it is sufficient to prove
that $\mathcal{I}(W') = \mathcal{I}(W)$. Since $W$ is an
$R[x,f]$-submodule of $W'$, it is clear that $\mathcal{I}(W)
\subseteq \mathcal{I}(W')$. Let $\fb \in \mathcal{I}(W')$, so that
$\fb R[x,f]$ is the graded annihilator of the graded $R[x,f]$
submodule $L' := \bigoplus_{n \geq b-1}L'_n = \ann_{W'}(\fb R[x,f])$
of $W'$. Set $L := \bigoplus_{n \geq b}L'_n$, and let
$\grann_{R[x,f]}L = \fc R[x,f]$, where $\fc$ is a radical ideal of
$R$. Note that $\fc \in \mathcal{I}(W)$ and $\fb \subseteq \fc$. Now
the two-sided ideal $\bigoplus_{n\geq 1} \fc x^n$ of $R[x,f]$
annihilates $L'$, so that $xc m = c^p xm=0$ for all $c \in \fc$ and
$m \in L'$. Since $L'$ is $x$-torsion-free by the above, it follows
that $\fc L' = 0$ and $\bigoplus_{n\geq 0} \fc x^n = \fc R[x,f]$
annihilates $L'$. Therefore $\fc \subseteq \fb$, and $\fb =
\fc\in\mathcal{I}(W)$.
\end{proof}

\begin{rmk}\label{nt.8a} Here we use
the notation and terminology of Lemma \ref{nt.8}, and write
$\overline{W}$ for $W/\Gamma_x(W)$, and also use `overlines' to
denote natural images in $\overline{W}$ of elements of $W$.

It is straightforward to check that there is a homogeneous
isomorphism of graded left $R[x,f]$-modules
\[
\ext(W;g_1, \ldots, g_t;1)/\Gamma_x(\ext(W;g_1, \ldots, g_t;1))
\cong \ext(\overline{W};\overline{g_1}, \ldots, \overline{g_t};1),
\] so that, by Lemma \ref{nt.8}, $$
\mathcal{G}\big(\ext(W;g_1, \ldots, g_t;1)/\Gamma_x(\ext(W;g_1,
\ldots, g_t;1))\big) = \mathcal{G}(\overline{W})$$ and $$
\mathcal{I}\big(\ext(W;g_1, \ldots, g_t;1)/\Gamma_x(\ext(W;g_1,
\ldots, g_t;1))\big) = \mathcal{I}(\overline{W}).$$
\end{rmk}

\begin{defi}\label{nt.9} Let $b \in \N$ and $W = \bigoplus_{n\geq b}W_n$ be a $\Z$-graded
left $R[x,f]$-module; let $g_1, \ldots, g_t \in W_b$. The $1$-place
extension $\ext(W;g_1, \ldots, g_t;1)$ of $W$ by $g_1, \ldots, g_t$
was defined in Lemma \ref{nt.8}. Recall that we defined $$K :=
\left\{ (r_1, \ldots, r_t) \in R^t : {\textstyle \sum_{i=1}^tr_ig_i
= 0}\right\}.
$$

Now let $h \in \N$ with $h \geq 2$. The {\em $h$-place extension
$\ext(W;g_1, \ldots, g_t;h)$ of $W$ by $g_1, \ldots, g_t$\/} is the
graded left $R[x,f]$-module
\[ \left(R^t/f^{-h}(K)\right)\oplus \cdots \oplus
\left(R^t/f^{-1}(K)\right)\oplus W_b \oplus  \cdots \oplus W_i
\oplus \cdots
\]
which has $\ext(W;g_1, \ldots, g_t;1)$ as a graded
$R[x,f]$-submodule and is such that $$x(v+ f^{-j}(K)) = f(v)+
f^{-(j-1)}(K) \quad \text{for all~} v \in R^t \text{~and~} j =
h,h-1, \ldots, 2.
$$

For each $i \in \{1, \ldots, t\}$, let $e_i$ denote the element
$(0,\ldots,0,1,0,\ldots,0)$ of $R^t$ which has a $1$ in the $i$th
spot and all other components $0$. It is straightforward to check
that
$$
\ext(W;g_1, \ldots, g_t;h) = \ext(\ext(W;g_1, \ldots,
g_t;1);\overline{e_1},\ldots,\overline{e_t};h-1),$$ where, for $v
\in R^t$, we use $\overline{v}$ to denote $v + f^{-1}(K)$, and
$$
\ext(W;g_1, \ldots, g_t;h) = \ext(\ext(W;g_1, \ldots,
g_t;h-1);\widetilde{e_1},\ldots,\widetilde{e_t};1),$$ where, for $v
\in R^t$, we use $\widetilde{v}$ to denote $v + f^{-(h-1)}(K)$.

It is a consequence of Lemma \ref{nt.8} that, if $W$ is
$x$-torsion-free, then so too is $\ext(W;g_1, \ldots, g_t;h)$, and
then $\mathcal{G}(\ext(W;g_1, \ldots, g_t;h)) = \mathcal{G}(W)$ and
$$\mathcal{I}(\ext(W;g_1, \ldots, g_t;h)) = \mathcal{I}(W).$$
\end{defi}

\begin{prop}\label{ext.23} Let $b \in \N$ and $W = \bigoplus_{n\geq b}W_n$ be a $\Z$-graded
left $R[x,f]$-module, and let $M$ be an $R$-module generated by the
finite set $\{m_1, \ldots, m_t\}$; then we can form the graded
$R[x,f]$-submodule $\bigoplus_{i \geq b}(Rx^i\otimes_RM)$ of
$R[x,f]\otimes_RM$. Suppose that there is given a homogeneous
$R[x,f]$-homomorphism $\lambda' = \bigoplus_{i\geq b}\lambda_i :
\bigoplus_{i \geq b}(Rx^i\otimes_RM) \lra W$.

For each $j = 1, \ldots, t$, let $g_j := \lambda_b(x^b\otimes m_j)
\in W_b$. Set $$ K := \left\{ (r_1, \ldots, r_t) \in R^t :
{\textstyle \sum_{j=1}^tr_jg_j = 0}\right\},
$$
as in Lemma\/ {\rm \ref{nt.8}}. For each $i = 0, 1, \ldots, b-1$,
there exists an $R$-homomorphism $\lambda_i : Rx^i \otimes_R M \lra
R^t/f^{-(b-i)}(K)$ such that
$$
\lambda_i\left({\textstyle \sum_{j=1}^t r_jx^i\otimes m_j}\right) =
(r_1,\ldots,r_t) + f^{-(b-i)}(K) \quad \text{for all~} r_1,
\ldots,r_t \in R.
$$
Furthermore, $$ \lambda := \bigoplus_{i\in \nn}\lambda_i :
R[x,f]\otimes_RM = \bigoplus_{i\in \nn}(Rx^i\otimes_RM) \lra
\ext(W;g_1, \ldots, g_t;b)
$$
is a homogeneous $R[x,f]$-homomorphism that extends $\lambda'$.
\end{prop}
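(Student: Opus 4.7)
The plan is to construct each $\lambda_i$ (for $0\leq i\leq b-1$) by factoring a projection from $R^t$ through the canonical surjection onto $Rx^i\otimes_RM$, verify well-definedness by using the $x$-action to transport relations up to degree $b$ where the given map $\lambda_b$ can be applied, and then check the $R[x,f]$-linearity case by case.

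First I would note that, since $M$ is generated by $m_1,\ldots,m_t$ and $Rx^i$ is a free left $R$-module on $x^i$, the left $R$-module $Rx^i\otimes_RM$ is generated by $\{x^i\otimes m_1,\ldots,x^i\otimes m_t\}$. Thus the map $\pi_i:R^t\lra Rx^i\otimes_RM$ sending $(r_1,\ldots,r_t)$ to $\sum_jr_jx^i\otimes m_j$ is a surjective $R$-homomorphism. To obtain $\lambda_i$ satisfying the stated formula it suffices to show $\ker\pi_i\subseteq f^{-(b-i)}(K)$, for then the composite $R^t\lra R^t/f^{-(b-i)}(K)$ factors through $\pi_i$.

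For the containment, suppose $(r_1,\ldots,r_t)\in\ker\pi_i$, so that $\sum_jr_jx^i\otimes m_j=0$ in $Rx^i\otimes_RM$ and hence in the graded left $R[x,f]$-module $R[x,f]\otimes_RM$. Applying $x^{b-i}$ on the left and using the commutation rule $xr=r^px$ iteratively gives
\[
0 \;=\; x^{b-i}\sum_{j=1}^tr_jx^i\otimes m_j \;=\; \sum_{j=1}^tr_j^{p^{b-i}}x^b\otimes m_j \quad \text{in } Rx^b\otimes_RM.
\]
Now applying the given $R$-homomorphism $\lambda_b$ to this equality yields $\sum_{j=1}^tr_j^{p^{b-i}}g_j=0$, i.e.\ $f^{b-i}(r_1,\ldots,r_t)\in K$, so that $(r_1,\ldots,r_t)\in f^{-(b-i)}(K)$, as required. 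This establishes the existence and the stated formula for each $\lambda_i$.

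It remains to check that $\lambda=\bigoplus_{i\in\nn}\lambda_i$ is a homogeneous $R[x,f]$-homomorphism extending $\lambda'$; homogeneity is built into the degree-wise construction, and the extension property holds because $\lambda_i$ for $i\geq b$ is by definition the component $\lambda'_i$ of $\lambda'$. For $R[x,f]$-linearity I verify compatibility with multiplication by $x$ in three cases. For $0\leq i\leq b-2$, the definition of $\ext(W;g_1,\ldots,g_t;b)$ gives $x\cdot((r_1,\ldots,r_t)+f^{-(b-i)}(K))=(r_1^p,\ldots,r_t^p)+f^{-(b-i-1)}(K)$, which equals $\lambda_{i+1}(\sum_jr_j^px^{i+1}\otimes m_j)=\lambda_{i+1}(x\cdot\sum_jr_jx^i\otimes m_j)$. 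For $i=b-1$, the definition of the $1$-place extension gives $x\cdot((r_1,\ldots,r_t)+f^{-1}(K))=\sum_jr_j^pg_j=\lambda_b(\sum_jr_j^px^b\otimes m_j)=\lambda_b(x\cdot\sum_jr_jx^{b-1}\otimes m_j)$. For $i\geq b$, the compatibility is exactly the $R[x,f]$-linearity of the given $\lambda'$.

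The main obstacle is the well-definedness of $\lambda_i$ for $i<b$. The conceptual point is that in degrees below $b$ the maps have no direct contact with the elements $g_j$; the only bridge is the $x$-action, which transports any relation $\sum_jr_jx^i\otimes m_j=0$ up to degree $b$, raising the coefficients to $p^{b-i}$th powers. Once in degree $b$, the map $\lambda_b$ converts that relation into an equality $\sum_jr_j^{p^{b-i}}g_j=0$, which is precisely the defining condition for membership of $f^{-(b-i)}(K)$. Everything else is bookkeeping guided by the explicit formulas for the $x$-action given in Lemma \ref{nt.8} and Definition \ref{nt.9}.
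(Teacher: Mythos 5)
Your proposal is correct and follows essentially the same approach as the paper's own proof: the key step in both is to transport a relation in degree $i < b$ up to degree $b$ via the $x$-action (raising coefficients to $p^{b-i}$th powers) and then apply $\lambda_b$ to conclude membership in $f^{-(b-i)}(K)$; you phrase this as a kernel containment for the surjection $\pi_i : R^t \to Rx^i\otimes_RM$ while the paper phrases it via two representatives, but these are the same argument. Your explicit case-by-case check of compatibility with the $x$-action is detail the paper leaves as ``straightforward,'' and it is carried out correctly.
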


\begin{proof} This is straightforward once it is been noted that, if
$i \in \{0,\ldots,b-1\}$ and $v = (r_1, \ldots,r_t),w =(s_1,
\ldots,s_t) \in R^t$ are such that $\sum_{j=1}^tr_jx^i \otimes m_j =
\sum_{j=1}^ts_jx^i \otimes m_j$, then $\sum_{j=1}^tr_j^{p^{b-i}}x^b
\otimes m_j = \sum_{j=1}^ts_j^{p^{b-i}}x^b \otimes m_j$, so that $v
- w \in f^{-(b-i)}(K)$ because
\begin{align*} {\textstyle \sum_{j=1}^t
(r_j-s_j)^{p^{b-i}}g_j} &= {\textstyle \sum_{j=1}^t
(r_j-s_j)^{p^{b-i}}\lambda_b(x^b\otimes m_j)}\\ &= {\textstyle
\sum_{j=1}^t \lambda_b\left((r_j-s_j)^{p^{b-i}}x^b\otimes
m_j\right)}= 0.
\end{align*}
\end{proof}

\section{\sc Use of an $R[x,f]$-module structure on the injective
envelope of the simple module over a local ring}

\begin{lem}\label{gor.9} Suppose that $(R,\fm)$ is local and that there exists a left
$R[x,f]$-module $E$ which, as $R$-module, is isomorphic to
$E_R(R/\fm)$, the injective envelope of the simple $R$-module
$R/\fm$. Construct the graded left $R[x,f]$-module $\widetilde{E}$
from $E$, as in Lemma\/ {\rm \ref{nt.6}}.

Let $M$ be a non-zero $R$-module of finite length with the property
that its zero submodule is irreducible, that is, cannot be expressed
as the intersection of two non-zero submodules. Then there exists a
homogeneous $R[x,f]$-homomorphism
\[
\lambda := \bigoplus_{i\in \nn}\lambda_i : R[x,f]\otimes_RM =
\bigoplus_{i\in \nn}(Rx^i\otimes_RM) \lra \widetilde{E}
\]
such that $\lambda_0$ is a monomorphism.
\end{lem}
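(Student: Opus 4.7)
The plan is to construct $\lambda_0$ as an injection $M \hookrightarrow E$, then bootstrap to the higher-degree pieces using the $R[x,f]$-module structure on $E$.

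First, I would extract from the hypotheses on $M$ the crucial structural statement: the socle of $M$ is simple and essential in $M$. Since $M$ has finite length, any non-zero submodule contains a simple submodule; if two distinct simple submodules $S_1,S_2$ existed in $M$, then $S_1\cap S_2=0$ would contradict the irreducibility of the zero submodule. Hence $\Soc(M)$ is simple, necessarily isomorphic to $R/\fm$, and moreover every non-zero submodule of $M$ contains $\Soc(M)$. Fix an embedding $R/\fm \hookrightarrow E$ (available because $E\cong E_R(R/\fm)$ as $R$-module) and extend it to a homomorphism $\lambda_0 : M \to E$ using the injectivity of $E$. The kernel of $\lambda_0$ meets $\Soc(M)$ trivially, so by essentiality it must be zero; thus $\lambda_0$ is a monomorphism, as required for $E_0$.

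Second, I would extend to higher degrees by exploiting the action of $x$ on $E$. For each $i \in \N$, define
\[
\lambda_i : Rx^i \otimes_R M \lra E_i = E, \qquad \lambda_i\!\left({\textstyle\sum_j} r_j x^i \otimes m_j\right) := {\textstyle\sum_j} r_j\, x^i \lambda_0(m_j),
\]
where $x^i \lambda_0(m_j)$ denotes the $i$-fold application of $x$ using the given left $R[x,f]$-structure on $E$. Well-definedness needs only the observation that the additive map $m \mapsto x^i \lambda_0(m)$ is Frobenius-semilinear of degree $p^i$: for $s \in R$,
\[
x^i \lambda_0(sm) = x^i\bigl(s \lambda_0(m)\bigr) = s^{p^i} x^i \lambda_0(m),
\]
which matches the right $R$-module structure on $Rx^i$ (via $f^i$), so the tensor product universal property delivers the $R$-linear $\lambda_i$.

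Finally, I would check that $\lambda := \bigoplus_{i\in\nn} \lambda_i$ is homogeneous $R[x,f]$-linear. It suffices to verify $\lambda_{i+1}(x\cdot(rx^i\otimes m)) = x\cdot\lambda_i(rx^i\otimes m)$ for all $r\in R$, $m\in M$; since $x\cdot(rx^i \otimes m) = r^p x^{i+1}\otimes m$, both sides reduce to $r^p x^{i+1}\lambda_0(m)$ via the skew-commutation rule $xr=r^p x$ applied inside $E$. Nothing about the particular ring $R$ or the structure on $E$ is used beyond the defining relations, so this step is routine. The genuine content of the lemma is the first step: the irreducibility of the zero submodule of $M$ is exactly what is needed to produce a monomorphism $\lambda_0$ into $E \cong E_R(R/\fm)$; once that is in hand, the tensor construction automatically carries it to a homogeneous $R[x,f]$-homomorphism whose degree-$0$ component is injective.
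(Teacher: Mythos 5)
Your proof is correct and takes essentially the same approach as the paper's. The paper condenses your first step into the single observation that $E_R(M) \cong E_R(R/\fm)$ (which holds for precisely the reason you spell out: $M$ has a simple essential socle isomorphic to $R/\fm$), and then defines the higher-degree components $\lambda_n$ by the same formula $rx^n \otimes m \mapsto rx^n\lambda_0(m)$.
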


\begin{proof} Since $E_R(M) \cong E_R(R/\fm)$, there exists
an $R$-monomorphism $\lambda_0 : M \lra E$. We can then define, for
each $n \in \N$, an $R$-homomorphism $\lambda_n: Rx^n \otimes_RM
\lra (\widetilde{E})_n = E$ for which $\lambda_n(rx^n \otimes m) =
rx^n\lambda_0(m)$ for all $r \in R$ and all $m \in M$. It is
straightforward to check that the $\lambda_n~(n\in\nn)$ provide a
homogeneous $R[x,f]$-homomorphism as claimed.
\end{proof}

\begin{rmk}\label{ext.22} Let $M$ be an $R$-module and let $h,n \in
\nn$. Endow $Rx^n$ and $Rx^h$ with their natural structures as
$(R,R)$-bimodules (inherited from their being graded components of
$R[x,f]$). Then there is an isomorphism of (left) $R$-modules $\phi
: Rx^{n+h}\otimes_R M \stackrel{\cong}{\lra}
Rx^{n}\otimes_R(Rx^{h}\otimes_R M)$ for which $\phi(rx^{n+h} \otimes
m) = rx^n \otimes (x^h \otimes m)$ for all $r \in R$ and $m \in M$.
\end{rmk}

\begin{rmk}\label{ext.22a} It was pointed out in the Introduction
that $R$ is $F$-pure precisely when, for each $R$-module $N$, the
map $\psi_N : N \lra Rx\otimes_RN$ for which $\psi_N(g) = x\otimes
g$ for all $g \in N$ is injective. In view of isomorphisms like
those described in Remark \ref{ext.22} above, this is the case if
and only if, for each $R$-module $N$, the left $R[x,f]$-module
$R[x,f]\otimes_RN$ is $x$-torsion-free; since tensor product
commutes with direct limits, we can conclude that $R$ is $F$-pure if
and only if, for each finitely generated $R$-module $N$, the left
$R[x,f]$-module $R[x,f]\otimes_RN$ is $x$-torsion-free.
\end{rmk}

\begin{lem}\label{ext.22b} Let $\fd$ be an ideal of $R$ of positive
height.  Then $\fd$ can be generated by the elements in $\fd \cap
R^{\circ}$.
\end{lem}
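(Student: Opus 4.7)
Since $R$ is Noetherian it has only finitely many minimal primes $\fp_1, \ldots, \fp_n$, and positive height of $\fd$ gives $\fd \not\subseteq \fp_i$ for every $i$. Classical prime avoidance therefore yields $\fd \cap R^{\circ} \neq \emptyset$. Write $J$ for the ideal of $R$ generated by $\fd \cap R^{\circ}$; $J \subseteq \fd$ trivially, and the task is to prove the reverse inclusion.

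Fix $a \in \fd \cap R^{\circ}$. The plan is to show that for each $d \in \fd$ there exists $\lambda \in R$ such that both $\lambda a$ and $d + \lambda a$ lie in $R^{\circ}$; these elements are then in $\fd \cap R^{\circ} \subseteq J$, and the identity $d = (d + \lambda a) - \lambda a$ places $d$ in $J$. To build $\lambda$, I use a separation-of-variables device: for each $i$, prime avoidance applied to $\fd \cap \bigcap_{j \ne i} \fp_j$ (which is not contained in $\fp_i$, by the incomparability of distinct minimal primes together with $\fd \not\subseteq \fp_i$) produces an element $a_i \in \fd$ with $a_i \notin \fp_i$ and $a_i \in \fp_j$ for every $j \ne i$. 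Writing $\lambda = \sum_{i=1}^n r_i a_i$, reduction modulo $\fp_k$ kills every summand except the $k$-th and yields $\bar\lambda \equiv \bar r_k \bar{a_k} \pmod{\fp_k}$; thus the requirements $\lambda a \notin \fp_k$ and $d + \lambda a \notin \fp_k$ become independent constraints on $\bar r_k \in R/\fp_k$, each forbidding at most two specified residues in that domain. Whenever $|R/\fp_k| \ge 3$ a valid $\bar r_k$ exists, and one lifts it arbitrarily to $r_k \in R$.

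The main obstacle is the edge case in which some residue ring $R/\fp_k$ has only two elements, so $R/\fp_k \cong \F_2$; if in addition $\bar d = 1$, the two forbidden residues exhaust $R/\fp_k$ and no admissible $\bar r_k$ exists. Any such $\fp_k$ must be simultaneously minimal and maximal (since $R/\fp_k$ is a field), hence an isolated point of $\Spec R$; the Noetherian ring therefore splits as a direct product $R \cong R_k \times R'$ with $R_k$ a local Artinian ring of residue field $\F_2$ and $\Spec R' = \Spec R \setminus \{\fp_k\}$. The positive-height hypothesis forces the $R_k$-factor of $\fd$ to equal $R_k$ (the unique minimal prime of $R_k$ is its maximal ideal, so $\fd R_k$ cannot be contained in it), and $R_k \cap R_k^{\circ}$ consists of units of $R_k$, which manifestly generate $R_k$ as an ideal. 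The statement thereby reduces to the corresponding one on $R'$, which has strictly fewer $\F_2$-residue isolated minimal primes; peeling off all such factors simultaneously reduces us to the good case handled in the preceding paragraph, and this completes the proof.
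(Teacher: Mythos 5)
Your proof is correct, but it takes a substantially longer route than the paper's. The paper's argument is a one-step application of the generalized prime avoidance lemma (the version allowing up to two non-prime ideals in the covering union): write $\fd'$ for the ideal generated by $\fd\cap R^{\circ}$, observe that trivially $\fd\subseteq \fd'\cup\fp_1\cup\cdots\cup\fp_t$ (every $d\in\fd$ lies either in some minimal prime or, failing that, in $R^{\circ}$ and hence in $\fd'$), and conclude from prime avoidance that $\fd\subseteq\fd'$, since $\fd\subseteq\fp_i$ is ruled out by $\height\fd\geq 1$. Your argument instead exhibits, for each $d\in\fd$, an explicit decomposition $d=(d+\lambda a)-\lambda a$ with both terms in $\fd\cap R^{\circ}$, building $\lambda$ by a CRT-style separation of variables across the minimal primes; this is sound, including the careful isolation and treatment of the $\F_2$-residue-field edge case via the idempotent splitting $R\cong R_k\times R'$. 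What the paper's approach buys is brevity and the avoidance of any special-casing; what yours buys is that it relies only on the most elementary form of prime avoidance (all ideals prime), and is fully constructive, making visible exactly which elements of $\fd\cap R^{\circ}$ are being combined. One small remark on exposition: each of your two constraints on $\bar r_k$ forbids at most \emph{one} residue in the domain $R/\fp_k$ (not two), giving at most two forbidden residues in total, which is what actually justifies the $|R/\fp_k|\geq 3$ threshold.
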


\begin{proof} Let $\fd'$ be the ideal generated by the elements of $\fd \cap
R^{\circ}$; of course, $\fd' \subseteq \fd$. Let $\fp_1, \ldots,
\fp_t$ be the minimal prime ideals of $R$. Then $\fd \subseteq \fd'
\cup \fp_1 \cup \cdots \cup \fp_t$, and so, by prime avoidance,
either $\fd \subseteq \fp_i$ for some $i \in \{1, \ldots, t\}$, or
$\fd \subseteq \fd'$. Since $\height \fd \geq 1$, we must have $\fd
\subseteq \fd'$.
\end{proof}

\begin{thm}\label{gor.10}
Suppose that $(R,\fm)$ is local, and that there exists a left
$R[x,f]$-module $E$ which, as $R$-module, is isomorphic to $
E_R(R/\fm)$. Let $M$ be a finitely generated $R$-module. Then, for
each $n \in \nn$, there is a countable family
$\left(E_{ni}\right)_{i \in Y_n}$ of $\nn$-graded left
$R[x,f]$-modules, each $x$-torsion-free if $E$ is, and with
$\mathcal{G}(E_{ni}/\Gamma_x(E_{ni})) = \mathcal{G}(E/\Gamma_x(E))$
for all $i \in Y_n$, for which there exists a homogeneous
$R[x,f]$-monomorphism
\[
\nu : R[x,f]\otimes_RM = \bigoplus_{i\in \nn}(Rx^i\otimes_RM) \lra
\prod_{\stackrel{{\scriptstyle n\in\nn}}{i \in Y_n}}{\textstyle
^{^{^{\!\!\Large \prime}}}} E_{ni}.
\]

In particular, if $E$ is $x$-torsion-free, then \begin{enumerate}
\item $R$ is reduced and has a test element (for modules), \item
$R[x,f]\otimes_RM$ is $x$-torsion-free and
$\mathcal{I}(R[x,f]\otimes_RM) \subseteq \mathcal{I}(E)$, \item $R$
is $F$-pure, \item the unique smallest ideal $\fb$ of positive
height in $\mathcal{I}(E)$ is contained in the test ideal $\tau(R)$
of $R$, and \item $\tau(R) \in \mathcal{I}(E)$.\end{enumerate}
\end{thm}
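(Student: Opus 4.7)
The plan is to manufacture each $E_{ni}$ as an $n$-place extension of a truncation of the graded $R[x,f]$-module $\widetilde{E}$ from Lemma \ref{nt.6}, and then to derive the particular consequences from the existence of the monomorphism $\nu$. Fix generators $m_1,\ldots,m_t$ of $M$ and write $M_n:=Rx^n\otimes_RM$. For each pair $(n,k)\in\nn\times\N$, the finite-length module $M_n/\fm^kM_n$ embeds into $E^{s_{n,k}}$ for some $s_{n,k}\in\N$ by Matlis duality; composition with the $s_{n,k}$ coordinate projections yields $R$-linear maps $\alpha_{n,k,j}:M_n\to E$ ($j=1,\ldots,s_{n,k}$) that collectively separate the points of $M_n/\fm^kM_n$. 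For each triple $(n,k,j)$ I take $W:=\widetilde{E}_{\geq n}$ (the truncation of $\widetilde{E}$ to degrees $\geq n$), and form the homogeneous $R[x,f]$-homomorphism $\lambda'_{n,k,j}:\bigoplus_{i\geq n}(Rx^i\otimes_RM)\to W$ given on generators by $rx^i\otimes m\mapsto rx^{i-n}\alpha_{n,k,j}(x^n\otimes m)$; Proposition \ref{ext.23} then extends this to a homogeneous $R[x,f]$-homomorphism $\lambda_{n,k,j}:R[x,f]\otimes_RM\to E_{n,k,j}:=\ext(W;g_{n,k,j,1},\ldots,g_{n,k,j,t};n)$, where $g_{n,k,j,i}:=\alpha_{n,k,j}(x^n\otimes m_i)\in W_n=E$. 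Assembling in the category of graded left $R[x,f]$-modules (Lemma \ref{nt.3}) yields $\nu$, and Krull's intersection theorem together with the separation properties of the $\alpha_{n,k,j}$ makes $\nu$ a monomorphism: a nonzero $\xi\in R[x,f]\otimes_RM$ has a nonzero homogeneous component $\xi_n$, and picking $k$ with $\xi_n\notin\fm^kM_n$ produces a $j$ with $\alpha_{n,k,j}(\xi_n)\neq 0$. The required properties of the $E_{n,k,j}$—$x$-torsion-freeness when $E$ is, and $\mathcal{G}(E_{n,k,j}/\Gamma_x(E_{n,k,j}))=\mathcal{G}(E/\Gamma_x(E))$—follow from Lemma \ref{nt.8}, Remark \ref{nt.8a}, and Lemma \ref{nt.6a}(i),(iii), once one notes that $\widetilde{E}_{\geq n}$ is $R[x,f]$-module-isomorphic to $\widetilde{E}$ via a degree shift (so has exactly the same submodule lattice and the same graded annihilators of submodules).

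Next, assume $E$ is $x$-torsion-free. For (i), Lemma \ref{ga1.9} applied to $E$ gives $\grann_{R[x,f]}E=\fb_ER[x,f]$ with $\fb_E$ radical; since $E\cong E_R(R/\fm)$ is a faithful $R$-module over the local Noetherian ring $R$, $\fb_E=0$ is radical, forcing $R$ reduced (the existence of test elements follows from (iv) below, provided $\fb\cap R^\circ\neq\emptyset$, which holds because $\fb$ has positive height by construction). Part (ii) combines Lemma \ref{nt.5}(iii) (which passes $x$-torsion-freeness to $\prod^{\prime}$) with Lemma \ref{nt.5}(i),(ii) (yielding $\mathcal{G}(\prod^{\prime}_{n,k,j}E_{n,k,j})=\mathcal{G}(E)$ because every $\mathcal{G}(E_{n,k,j})=\mathcal{G}(E)$); embedding $R[x,f]\otimes_RM$ into this product delivers both $x$-torsion-freeness and $\mathcal{I}(R[x,f]\otimes_RM)\subseteq\mathcal{I}(E)$. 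Part (iii) is Remark \ref{ext.22a} applied to every finitely generated $R$-module.

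Parts (iv) and (v) carry the remaining weight. For a finitely generated $M$, let $N_M\subseteq R[x,f]\otimes_RM$ be the $R[x,f]$-submodule generated by $\{1\otimes m:m\in 0^*_M\}$. Being a submodule of an $x$-torsion-free module, $N_M$ is $x$-torsion-free, so Lemma \ref{ga1.9} produces a radical $\fc_M\in\mathcal{I}(R[x,f]\otimes_RM)\subseteq\mathcal{I}(E)$ with $\grann_{R[x,f]}N_M=\fc_MR[x,f]$. The crucial step is that $\fc_M$ has positive height: write $N_M=\sum_{m\in 0^*_M}R[x,f](1\otimes m)$, so that $\fc_M=\bigcap_m\fd_m$ with $\fd_m\in\mathcal{I}(R[x,f]\otimes_RM)$ the radical ideal attached to $R[x,f](1\otimes m)$; for each $m$, the witness $c'_m\in R^\circ$ from the definition of $0^*_M$ lies in every sufficiently large graded component of $\grann_{R[x,f]} R[x,f](1\otimes m)$, and Lemma \ref{ga1.9} equates all those components to $\fd_m$, so $\fd_m\cap R^\circ\neq\emptyset$. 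Finiteness of $\mathcal{I}(E)$ (Theorem \ref{ga3.11}) reduces $\bigcap_m\fd_m$ to a finite intersection of positive-height ideals, which has positive height. Hence $\fc_M\supseteq\fb$, so every $c\in\fb\cap R^\circ$ is a test element, and Lemma \ref{ext.22b} then delivers $\fb\subseteq\tau(R)$. The same finite-intersection trick identifies $\tau(R)=\bigcap_M\fc_M\in\mathcal{I}(E)$, giving (v). The main obstacle is this positive-height argument for $\fc_M$, where the finiteness of $\mathcal{I}(E)$ plays the decisive role of collapsing an a priori infinite intersection of ideals with witnesses in $R^\circ$ into a finite intersection that preserves that property.
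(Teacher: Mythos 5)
Your construction of $\nu$ is essentially the paper's, up to cosmetic choices: you use the truncation $\widetilde{E}_{\geq n}$ in place of the shift $\widetilde{E}(-n)$ (these coincide here), and you produce the separating maps $\alpha_{n,k,j}:M_n\to E$ by embedding each $M_n/\fm^kM_n$ Matlis-dually into a finite power of $E$ and projecting, whereas the paper writes $0$ in $M_n$ as a countable intersection of irreducible submodules of finite colength and uses Lemma \ref{gor.9}. Both reduce to the Krull intersection theorem, and both yield an $n$-place extension to which Proposition \ref{ext.23} applies; the injectivity-in-each-degree argument is the same.

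The genuine divergence is in (i) and especially (iv)--(v). For reducedness you argue directly that $(0:_RE)=0$ is the degree-zero part of $\grann_{R[x,f]}E$, which is radical by Lemma \ref{ga1.9}; the paper instead deduces reducedness from the $F$-purity established in (iii). More substantially, for (iv)--(v) the paper invokes \cite[Theorem 3.12]{ga} (not re-quoted in this paper) for the key fact that any element of $R[x,f]\otimes_RM$ killed by $\bigoplus_{n\geq n_0}Rcx^n$ with $c\in R^{\circ}$ must be killed by $\fb R[x,f]$. You re-derive this in a more self-contained way: you decompose $N_M=\sum_{m\in 0^*_M}R[x,f](1\otimes m)$, apply Lemma \ref{ga1.9} to each cyclic piece to obtain a constant-ideal graded annihilator $\fd_m R[x,f]$ (correctly noting that the tight-closure witness $c'_m\in R^{\circ}$ then lies in $\fd_m$, since it lies in every sufficiently high degree of the graded annihilator and all degrees equal $\fd_m$), and then use the finiteness of $\mathcal{I}(E)$ to collapse $\bigcap_m\fd_m$ to a finite intersection of positive-height ideals. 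That gives $\height\fc_M\geq 1$ and hence $\fb\subseteq\fc_M$, which is exactly what the paper extracts from \cite[Theorem 3.12]{ga}. This is a nice replacement for the external citation and makes the role of the finiteness of $\mathcal{I}(E)$ explicit. The one place you are terser than you should be is the identity $\tau(R)=\bigcap_M\fc_M$ in (v): the inclusion $\bigcap_M\fc_M\subseteq\tau(R)$ follows from $\fc_M=(0:_RN_M)\subseteq(0:_R(N_M)_0)=(0:_R 0^*_M)$ together with the Hochster--Huneke formula $\tau(R)=\bigcap_M(0:_R0^*_M)$; the reverse inclusion uses that, by (iv), test elements exist, that $\tau(R)$ is then generated by test elements, and that each test element annihilates the whole of $N_M$ (not just its degree-zero part), since a test element $c$ satisfies $c(ax^k\otimes m)=a(cx^k\otimes m)=0$ for $m\in 0^*_M$. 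These are easy but should be said; the paper instead works with $\fb_M=\grann_{R[x,f]}(\ann_{R[x,f]\otimes_RM}\tau(R)R[x,f])\cap R$ and closes with a contradiction argument using Lemma \ref{ext.22b}.
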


\begin{proof} For each $n \in \nn$, the (left) $R$-module $Rx^n
\otimes_RM$ is finitely generated, and so $\bigcap_{j \in
\N}\fm^j(Rx^n \otimes_RM) = 0$; therefore the zero submodule of
$Rx^n \otimes_RM$ can be expressed as the intersection of a
countable family $(Q_{ni})_{i\in Y_n}$ of irreducible submodules of
finite colength.

Construct the graded left $R[x,f]$-module $\widetilde{E}$ from $E$,
as in Lemma \ref{nt.6}. By Lemma \ref{gor.9}, there is, for each $n
\in \nn$ and $i \in Y_n$, a homogeneous $R[x,f]$-homomorphism
$R[x,f]\otimes_R\left((Rx^n\otimes_R M)/Q_{ni}\right) \lra
\widetilde{E}$ which is monomorphic in degree $0$. If we now compose
this with the natural homogeneous $R[x,f]$-epimorphism
$$R[x,f]\otimes_R(Rx^n\otimes_R M) \lra
R[x,f]\otimes_R\left((Rx^n\otimes_R M)/Q_{ni}\right)$$ and use
isomorphisms of the type described in Remark \ref{ext.22}, we obtain
(after application of the shift functor $ (\: {\scriptscriptstyle
\bullet} \:)(-n)$) a homogeneous $R[x,f]$-homomorphism
$$
\lambda'_{ni} : \bigoplus_{j \geq n}(Rx^j\otimes_RM) \lra
\widetilde{E}(-n)
$$
for which the $n$th component has kernel equal to $Q_{ni}$.

We can now use Corollary \ref{ext.23} to extend $\lambda'_{ni}$ by
$n$ places to produce a homogeneous $R[x,f]$-homomorphism
$$
\lambda_{ni} : \bigoplus_{j \geq 0}(Rx^j\otimes_RM) =
R[x,f]\otimes_RM \lra E_{ni},
$$
where $E_{ni}$ is an appropriate $n$-place extension of
$\widetilde{E}(-n)$, for which the $n$th component has kernel equal
to $Q_{ni}$. Note that, by Lemma \ref{nt.6a}, Remark \ref{nt.8a} and
Definition \ref{nt.9}, we have $\mathcal{G}(E_{ni}/\Gamma_x(E_{ni}))
= \mathcal{G}(E/\Gamma_x(E))$ and that $E_{ni}$ is $x$-torsion-free
if $E$ is.

There is therefore a homogeneous $R[x,f]$-homomorphism
\[
\nu  = \bigoplus_{j\in \nn}\nu_j : R[x,f]\otimes_RM \lra
\prod_{\stackrel{{\scriptstyle n\in\nn}}{i \in Y_n}}{\textstyle
^{^{^{\!\!\Large \prime}}}} E_{ni} =: K
\]
such that $\nu_j(\xi) =
\big((\lambda_{ni})_j(\xi)\big)_{n\in\nn,i\in Y_n}$ for all $j \in
\nn$ and $\xi \in Rx^j \otimes_RM$. For each $j \in \nn$, the zero
submodule of $Rx^j\otimes_RM$ is equal to $\bigcap_{i\in
Y_j}Q_{ji}$, and this means that $\nu_j$ is a monomorphism. Hence
$\nu$ is an $R[x,f]$-monomorphism.

Now suppose for the remainder of the proof that $E$ is
$x$-torsion-free. By Lemmas \ref{nt.6a} and \ref{nt.8}, we see that
$E_{ni}$ is $x$-torsion-free, for all $n \in \nn$ and all $i \in
Y_n$. We now deduce from Lemma \ref{nt.5}(iii) that $K$ is
$x$-torsion-free, so that $R[x,f]\otimes_RM$ is $x$-torsion-free in
view of the $R[x,f]$-monomorphism $\nu$. As this is true for each
choice of finitely generated $R$-module $M$, it follows from Remark
\ref{ext.22a} that $R$ is $F$-pure, and therefore reduced.

The $R[x,f]$-monomorphism $\nu$ also shows that
$\mathcal{G}(R[x,f]\otimes_RM) \subseteq \mathcal{G}(K)$, while
Lemmas \ref{nt.5}(ii), \ref{nt.6a} and \ref{nt.8} show that
$\mathcal{G}(K) = \mathcal{G}(\widetilde{E}) = \mathcal{G}(E)$.
Therefore $$\mathcal{I}(R[x,f]\otimes_RM) \subseteq
\mathcal{I}(E).$$ Since $E_R(R/\fm)$ is Artinian as an $R$-module,
it follows from \cite[Corollary 3.11 and Theorem 3.12]{ga} that
there exists a unique smallest ideal $\fb$ of positive height in
$\mathcal{I}(E)$, and that any element of $R[x,f]\otimes_RM$ that is
annihilated by $\bigoplus_{n \geq n_0}Rcx^n$ for some $c \in
R^{\circ}$ and $n_0 \in \nn$ must also be annihilated by $\fb
R[x,f]$. This means that each element of $\fb \cap R^{\circ}$ (note
that this set generates $\fb$, by Lemma \ref{ext.22b}) is a test
element (for modules) for $R$.

Next, note that $\ann_{R[x,f]\otimes_RM}\tau(R)R[x,f] =
\bigoplus_{n\in\nn} 0^*_{Rx^n\otimes_RM}$, and so, by the
immediately preceding paragraph, the graded annihilator of this will
be $\fb_MR[x,f]$ for some $\fb_M \in \mathcal{I}(E)$ for which
$\tau(R) \subseteq \fb_M$.

By \cite[Corollary 3.7]{ga}, each member of $\mathcal{I}(E)$ is the
intersection of the members of a subset of the finite set
$\mathcal{I}(E) \cap \Spec(R)$. It therefore follows that $\tau(R)$
is the intersection of all members of the finite set $\{ \fp \in
\mathcal{I}(E) \cap \Spec(R) : \height \fp \geq 1\}$, and that
$\fb_M$ is the intersection of the members of some subset of this
set.

Set $\fd := \bigcap_{M}\fb_M$, where the intersection is taken over
all finitely generated $R$-modules $M$. It follows from the above
paragraph that $\fd$ is the intersection of the members of some
subset of $\{ \fp \in \mathcal{I}(E) \cap \Spec(R) : \height \fp
\geq 1\}$, so that it belongs to $\mathcal{I}(E)$ by \cite[Corollary
1.12]{ga}.  Note that $\height \fd \geq 1$.

We suppose that $\tau(R) \subset \fd$ and seek a contradiction. (The
symbol `$\subset$' is reserved to denote strict inclusion.) Now
$\fd$ can, by Lemma \ref{ext.22b}, be generated by elements in $\fd
\cap R^{\circ}$, and so there exists $a \in \fd \cap R^{\circ}
\setminus \tau(R)$. Then $a$ is not a test element for $R$, and yet
it belongs to $R^{\circ}$. This means that there must exist a
finitely generated $R$-module $N$ and an element $y \in 0^*_N$ such
that $1\otimes y \in (R[x,f]\otimes_RN)_0$ is not annihilated by
$(aR)R[x,f]$. Therefore $a \not\in \fb_N$, and this is a
contradiction. Therefore $\tau(R) = \fd \in \mathcal{I}(E)$.
\end{proof}

\begin{rmk}\label{gor.10aa} In the special case of Theorem
\ref{gor.10} in which $E$ is $x$-torsion-free, the main thrust of
the argument presented in the above proof comes from the use of the
homogeneous $R[x,f]$-monomorphism $\nu$, in conjunction with the
unique smallest ideal $\fb$ of positive height in $\mathcal{I}(E)$,
to establish the existence of a test element for $R$. The conclusion
(in part (iii)) that $R$ is $F$-pure comes as a byproduct. I am
grateful to the referee for pointing out the following short
alternative proof of the fact that, in these circumstances, $R$ is
$F$-pure.

Let $\kappa : E \lra Rx \otimes_RE$ be the Abelian group
homomorphism for which $\kappa(e) = x \otimes e$ for all $e \in E$,
and let $\mu : Rx \otimes_RE \lra E$ be the $R$-homomorphism for
which $\mu(rx \otimes e) = rxe$ for all $r \in R$ and $e \in E$.
Then the map $\mu\kappa:E \lra E$ satisfies $\mu\kappa(e) = xe$ for
all $e \in E$, and so is injective because $E$ is $x$-torsion-free.
Hence $\kappa$ is injective. Therefore $R$ is $F$-pure by a result
of M. Hochster and J. L. Roberts \cite[Proposition 6.11]{HocRob74}.
\end{rmk}

Before we deduce the main result of the paper from Theorem
\ref{gor.10}, we draw attention to other known results that relate
the test ideal of the local ring $(R,\fm)$ to annihilators of
submodules of the $R$-module $E(R/\fm)$.

\begin{disc}\label{gor.10a} Recall from Hochster--Huneke \cite[Definition
(8.22)]{HocHun90} that, in general, even in the case where $R$ does
not have a test element, the test ideal $\tau(R)$ of $R$ is defined
to be $\bigcap_{M}(0 :_R0^*_M)$, where the intersection is taken
over all finitely generated $R$-modules $M$. If $R$ has a test
element (for modules), then $\tau(R)$ is the ideal of $R$ generated
by all such test elements, and $\tau(R)\cap R^{\circ}$ is the set of
all test elements (for modules) for $R$. (See \cite[Proposition
(8.23)(b)]{HocHun90}.)

Henceforth in this discussion, we assume that $(R,\fm)$ is local,
and we set $E := E_R(R/\fm)$.

\begin{enumerate}
\item Recall from Hochster--Huneke \cite[Definition (8.19)]{HocHun90} that
the {\em finitistic tight closure of $0$ in $E$,\/} denoted by
$0^{*{\text f}{\text g}}_E$, is defined to be $\bigcup_{M}0^*_M$,
where the union is taken over all finitely generated $R$-submodules
$M$ of $E$. It was shown in \cite[Proposition (8.23)(d)]{HocHun90}
that $\tau (R) = (0:_R0^{*{\text f}{\text g}}_E)$.
\item In the case where $R$ is a reduced ring that is a homomorphic
image of an excellent regular local ring of characteristic $p$,
results of G. Lyubeznik and K. E. Smith in \cite[\S 7]{LyuSmi01}
establish some very good properties of the ideal
$$\widetilde{\tau}(R) = (0:_R0^{*}_E).$$
Some of their results were extended by I. M. Aberbach and F. Enescu,
in \cite[Theorem 3.6]{AbeEnu02}, to the case where $R$ is a reduced
excellent local ring.

Notice that, by part (i), the ideal $\widetilde{\tau}(R)$ is equal
to the test ideal of $R$ in the case where $0^{*}_E = 0^{*{\text
f}{\text g}}_E$.
\item In the case where $R$ is Gorenstein, or merely
quasi-Gorenstein, we have $E \cong H^{\dim R}_{\fm}(R)$; then,
provided $R$ is excellent and equidimensional (Gorenstein local
rings are automatically equidimensional), it follows from work of K.
E. Smith \cite[Proposition 3.3]{kes2} that $0^{*}_E = 0^{*{\text
f}{\text g}}_E$, so that, by part (ii), $$\tau(R) = (0:_R0^{*}_E).$$
\item Recall from the Introduction that membership of $0^{*}_E$ is
(essentially) defined in terms of the natural (graded) left
$R[x,f]$-module structure on $R[x,f] \otimes_R E =
\bigoplus_{n\in\nn} (Rx^n \otimes_R E)$: we have that $m \in E$
belongs to $0^{*}_E$ if and only if there exists $c \in R^{\circ}$
and $n_0 \in \nn$ such that the element $$1 \otimes m \in
(R[x,f]\otimes_RE)_0$$ is annihilated by $\bigoplus_{j\geq
n_0}Rcx^j$. In the case where $R$ is Gorenstein, or merely
quasi-Gorenstein, we have $E \cong H^{\dim R}_{\fm}(R) =: H$, and
the latter $R$-module carries a natural structure as left
$R[x,f]$-module (as recalled in \cite[Reminder 4.1]{ga}). If we
construct the graded left $R[x,f]$-module $\widetilde{H}$ from $H$
as in Lemma \ref{nt.6}, then it follows from \cite[Remark
4.2(iii)]{ga} that there are $R[x,f]$-isomorphisms
$$
R[x,f] \otimes_R E \cong R[x,f] \otimes_R H \cong \widetilde{H}.
$$
\item Thus, in the special case in which $R$ is Gorenstein, the conclusions
of Theorem \ref{gor.10}(i)--(v) will not surprise experts in tight
closure theory; however, in more general situations, the approach
taken in that theorem presents a new perspective on the test ideal.
\end{enumerate}
\end{disc}

\begin{cor}\label{gor.11} Suppose that $(R,\fm)$ is local and complete, and that there
exists an $x$-torsion-free left $R[x,f]$-module $E$ which, as
$R$-module, is isomorphic to $E_R(R/\fm)$. Then, for each ideal $\fc
\in \mathcal{I}(E)$, the following hold:

\begin{enumerate}
\item there is an isomorphism of $R/\fc$-modules $\ann_E(\fc) \cong
E_{R/\fc}\left((R/\fc)/(\fm/\fc)\right)$, and $\ann_E(\fc)$ inherits
from $E$ a structure as $x$-torsion-free left $(R/\fc)[x,f]$-module
for which
$$
\mathcal{I}_{R/\fc}(\ann_E(\fc)) = \left\{ \fd/\fc: \fd \in
\mathcal{I}(E) \text{~and~} \fd \supseteq \fc \right\}\mbox{;}
$$
\item the complete local ring $\overline{R} := R/\fc$ is $F$-pure and
reduced, and has a test element (for modules); and
\item the test ideal of $\overline{R}$ is $\fd/\fc$ for some ideal $\fd \in
\mathcal{I}(E)$ with $\height (\fd/\fc) \geq 1$.
\end{enumerate}

We conclude that there is a strictly ascending chain
$$
0 = \tau_0 \subset \tau_1 \subset \cdots \subset \tau_{n-1} \subset
\tau_n = R
$$
of ideals of $R$, all belonging to $\mathcal{I}(E)$, such that, for
all $i = 0, \ldots, n-1$, the ring $R/\tau_i$ is $F$-pure and
reduced, and has a test element (for modules), and its test ideal is
$\tau_{i+1}/\tau_i$.

\end{cor}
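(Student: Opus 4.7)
The approach is to deduce (i)--(iii) by applying Theorem \ref{gor.10} to the complete local ring $\overline{R} := R/\fc$, after first checking that $\ann_E(\fc)$ supplies the required $x$-torsion-free $\overline{R}[x,f]$-module isomorphic to the injective envelope of $\overline{R}/(\fm/\fc)$; the chain is then constructed by iteration.

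For part (i), since $\fc \in \mathcal{I}(E)$, Proposition \ref{ga1.11} identifies $\ann_E(\fc)$ with $\ann_E(\fc R[x,f])$, so it is an $R[x,f]$-submodule of $E$ on which $\fc$ acts as zero; the action therefore factors through $\overline{R}[x,f]$, and $x$-torsion-freeness is inherited from $E$. The $\overline{R}$-module isomorphism $\ann_E(\fc) \cong E_{\overline{R}}\bigl(\overline{R}/(\fm/\fc)\bigr)$ is the classical identification $\Hom_R(R/\fc, E_R(R/\fm)) \cong E_{R/\fc}(R/\fm)$. For the description of $\mathcal{I}_{\overline{R}}(\ann_E(\fc))$, I would observe that an $\overline{R}[x,f]$-submodule of $\ann_E(\fc)$ is nothing other than an $R[x,f]$-submodule of $E$ contained in $\ann_E(\fc)$; via Lemma \ref{ga1.7} the special annihilator submodules on both sides coincide, and Proposition \ref{ga1.11} applied over $\overline{R}$ then sends such an $N$ to $(0:_{\overline{R}} N) = \fd/\fc$, where $\fd := (0:_R N)$ runs through the ideals in $\mathcal{I}(E)$ containing $\fc$.

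Parts (ii) and (iii) now follow from Theorem \ref{gor.10} applied to $\overline{R}$ with $\ann_E(\fc)$ in place of $E$: clauses (i) and (iii) of that theorem give that $\overline{R}$ is reduced, $F$-pure and has a test element, and clause (v) gives $\tau(\overline{R}) \in \mathcal{I}_{\overline{R}}(\ann_E(\fc))$. Combining with part (i), $\tau(\overline{R}) = \fd/\fc$ for some $\fd \in \mathcal{I}(E)$ with $\fd \supseteq \fc$, and $\height(\fd/\fc) \geq 1$ because the generating test elements of $\overline{R}$ lie in $\overline{R}^{\circ}$, so $\tau(\overline{R})$ meets no minimal prime of $\overline{R}$.

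The chain is built inductively. Start with $\tau_0 = 0$, which lies in $\mathcal{I}(E)$ since $E = E_R(R/\fm)$ is faithful over the complete local ring $R$, so $\grann_{R[x,f]} E = 0$ by Lemma \ref{ga1.9}. Given $\tau_i \in \mathcal{I}(E)$ with $\tau_i \neq R$, part (iii) produces $\tau_{i+1} \in \mathcal{I}(E)$ with $\tau_{i+1}/\tau_i = \tau(R/\tau_i)$ of positive height in $R/\tau_i$, whence $\tau_i \subsetneq \tau_{i+1}$. The finiteness of $\mathcal{I}(E)$ from Theorem \ref{ga3.11} forces the process to terminate, and termination can only occur at $\tau_n = R$. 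The main obstacle I anticipate is the ideal-matching in (i)---transferring Lemma \ref{ga1.7}'s bijection between the $R[x,f]$- and $\overline{R}[x,f]$-settings cleanly---while the remainder is routine bookkeeping against Theorem \ref{gor.10} and the finiteness of $\mathcal{I}(E)$.
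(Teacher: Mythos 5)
Your overall strategy matches the paper's: realize $\ann_E(\fc)$ as an $x$-torsion-free $(R/\fc)[x,f]$-module isomorphic to the injective envelope of the residue field of $R/\fc$, apply Theorem \ref{gor.10} to it, and build the chain by induction using finiteness of $\mathcal{I}(E)$. However, the opening step of part (i) contains a genuine gap. You write that ``Proposition \ref{ga1.11} identifies $\ann_E(\fc)$ with $\ann_E(\fc R[x,f])$.'' That proposition does not say this: it says only that the special annihilator submodule $\Delta^{-1}(\fc)$ equals $\ann_E(\fc R[x,f])$. The inclusion $\ann_E(\fc R[x,f]) \subseteq \ann_E(\fc)$ is trivial, but the reverse inclusion --- equivalently, the fact that the plain $R$-annihilator $\ann_E(\fc)$ is actually closed under multiplication by $x$, hence an $R[x,f]$-submodule of $E$ --- is not a formal consequence of Proposition \ref{ga1.11}. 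It is precisely the content of Proposition \ref{gor.3}, whose proof uses Matlis duality over the \emph{complete} local ring $R$. This is the place where the completeness hypothesis of Corollary \ref{gor.11} enters essentially; without it the set $\ann_E(\fc)$ need not be an $R[x,f]$-submodule at all, and the $(R/\fc)[x,f]$-structure you want to place on it would not exist.

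The same omission affects your identification of $\mathcal{I}_{R/\fc}(\ann_E(\fc))$: to know that every $(R/\fc)[x,f]$-submodule of $\ann_E(\fc)$ corresponds to some $\fd \in \mathcal{I}(E)$ with $\fd \supseteq \fc$, one needs that every $R[x,f]$-submodule of $E$ is a special annihilator submodule, which again is the first assertion of Proposition \ref{gor.3}. Once you replace your appeal to Proposition \ref{ga1.11} with Proposition \ref{gor.3} (or reproduce its Matlis-duality argument), the remainder --- the classical isomorphism $\Hom_R(R/\fc, E) \cong E_{R/\fc}$, the application of Theorem \ref{gor.10} for (ii) and (iii), the positivity of the height via $\overline{R}^{\circ}$, and the inductive construction of the chain that must terminate at $R$ by finiteness of $\mathcal{I}(E)$ --- is correct and is the paper's own route.
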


\begin{notes} The referee has pointed out that, in the special case
of Corollary \ref{gor.11} in which $R$ is (also) Gorenstein and
$F$-finite, the conclusion that $R/\fc$ is $F$-pure in part (ii)
follows from Enescu and Hochster \cite[Discussion 2.6 and Theorem
4.1]{EneHocarX} (and Proposition \ref{gor.3} above).

Note that the final conclusion of Corollary \ref{gor.11} is similar
to J. Cowden Vassilev's result in \cite[\S 4]{Cowde98} that, if $R$
is a (not necessarily complete) $F$-pure homomorphic image of an
$F$-finite regular local ring, then there exists a strictly
ascending chain $$0 = \tau_0 \subset \tau_1 \subset \cdots \subset
\tau_t = R$$ of radical ideals of $R$ such that, for each $i = 0,
\ldots, t-1$, the reduced local ring $R/\tau_i$ is $F$-pure and its
test ideal (has positive height and) is exactly $\tau_{i+1}/\tau_i$.
\end{notes}

\begin{proof} By Proposition \ref{gor.3}, we have
$\ann_E\left(\bigoplus_{n\in\nn}\fc x^n\right) = \ann_E(\fc)$, and
this is an $R[x,f]$-submodule of $E$ that is annihilated by $\fc$.
Use overlines to denote natural images in $R/\fc$ of elements of
$R$. It is easy to use \cite[Lemma 1.3]{KS} to see that
$\ann_E(\fc)$ inherits a structure as left $(R/\fc)[x,f]$-module
with $\overline{r}e = re$ for all $r \in R$ and $e \in \ann_E(\fc)$
(and multiplication by $x$ on an element of $\ann_E(\fc)$ as in the
$R[x,f]$-module $E$). Note that $\ann_E(\fc)$ is $x$-torsion-free,
and that
$$
\mathcal{I}_{R/\fc}(\ann_E(\fc)) = \left\{ \fd/\fc: \fd \in
\mathcal{I}(E) \text{~and~} \fd \supseteq \fc \right\}.
$$

Now, as $R/\fc$-module, $\ann_E(\fc) \cong
E_{R/\fc}\left((R/\fc)/(\fm/\fc)\right)$ (by \cite[Lemma
10.1.15]{LC}, for example). Thus part (i) is proved. We can also
apply Theorem \ref{gor.10} to the left $(R/\fc)[x,f]$-module
$\ann_E(\fc)$ to prove parts (ii) and (iii); the same theorem shows
that the test ideal $\tau(R/\fc)$ of $R/\fc$ belongs to
$\mathcal{I}_{R/\fc}(\ann_E(\fc))$. Note also that $R/\fc$ satisfies
the hypotheses of the corollary, so that conclusions (i), (ii) and
(iii) are valid for $R/\fc$ and $\ann_E(\fc)$; with these
observations, it is easy to prove the final claim by induction.
\end{proof}

We end the paper with some comments about sources of examples of
local rings that satisfy the hypotheses of Corollary \ref{gor.11}.

\begin{ex}\label{gor.12} Suppose that $(R,\fm)$ is local, complete, $F$-injective and
quasi-Goren\-stein. (Note that Gorenstein local rings are
quasi-Gorenstein, and that the properties of being $F$-injective and
quasi-Gorenstein are inherited by the completion of a non-complete
local ring of characteristic $p$.)

Let $d := \dim R$ and $H := H^d_{\fm}(R)$, and note that $H$ has a
natural left $R[x,f]$-module structure (recalled in \cite[Reminder
4.1]{ga}); since $R$ is $F$-injective, $H$ is $x$-torsion-free; and
since $R$ is quasi-Gorenstein, there is an $R$-isomorphism $H \cong
E_R(R/\fm)$.

It therefore follows from Corollary \ref{gor.11} that, for each
ideal $\fb \in \mathcal{I}(H)$, the complete reduced local ring
$R/\fb$ satisfies the hypotheses of that corollary. This, together
with Fedder's criterion for $F$-purity \cite[Theorem 1.12]{Fedde83}
ensures a good supply of examples of rings to which the results of
this paper apply.
\end{ex}

\begin{ex}\label{ex.13} Suppose that $(R,\fm)$ is local and complete, and that there
exists an $x$-torsion-free left $R[x,f]$-module $E$ which, as
$R$-module, is isomorphic to $E_R(R/\fm)$. Note that these
hypotheses imply that $R$ is reduced, by Corollary \ref{gor.11}. Let
$\fp $ be a minimal prime ideal of $R$.

Observe that $(0:_R E) = 0$, and so $0 \in \mathcal{I}(E)$.
Therefore, by \cite[Theorem 3.6]{ga}, we have $\fp \in
\mathcal{I}(E)$, and it follows from Corollary \ref{gor.11} that the
complete local domain $R/\fp$ satisfies the hypotheses of that
corollary.
\end{ex}

Another example is provided by M. Katzman's work in \cite{MK}.

\begin{ex}\label{mksect9} Let $\F_2$ be the field of two elements,
let $T_1, T_2, T_3, T_4, T_5$ be independent indeterminates, and let
$R:= \F_2[[T_1, T_2, T_3, T_4, T_5]]/\fd$, where $\fd$ is the ideal
of $\F_2[[T_1, T_2, T_3, T_4, T_5]]$ generated by the $2 \times 2$
minors of the matrix
$$
\left(\begin{array}{cccc} T_1 & T_2 & T_2 & T_5\\T_4 & T_4 & T_3 &
T_1 \end{array} \right).
$$
Then $R$ is Cohen--Macaulay but not Gorenstein, and it follows from
the calculations reported by Katzman in \cite[\S 9]{MK} that the
injective envelope $E$ of the simple $R$-module carries a structure
of $x$-torsion-free left $R[x,f]$-module. Thus the conclusions of
Corollary \ref{gor.11} apply to this $R$.
\end{ex}

\end{document}